\numberwithin{equation}{section} 
\newcommand\numberthis{\stepcounter{equation}\tag{\theequation}} 
\newcounter{constantno}
\newcommand{\constantnumber}[1]{\refstepcounter{constantno}\label{#1}}
\newtheorem{lemma}{Lemma}[section]
\newtheorem{theorem}[lemma]{Theorem}
\newtheorem{corollary}[lemma]{Corollary}
\theoremstyle{definition}
\theoremstyle{remark}
\newtheorem{remark}[lemma]{Remark}
\newcommand{\ba}{\bar{\alpha}}  
\newcommand{\bb}{\bar{\beta}}
\newcommand{\bl}{\bar{\lambda}}
\newcommand{\bm}{\bar{\mu}}
\newcommand{\la}{\langle}
\newcommand{\ra}{\rangle}
\begin{document}
\title{Horizontal Gradient Estimate of Positive Pseudo-Harmonic Functions on Complete Noncompact Pseudo-Hermitian Manifolds
\footnotetext{\textbf{Keywords}: Gradient estimate, Pseudo-harmonic function, Eigenvalue, Sub-Laplacian, Liouville theorem}
\footnotetext{\textbf{MSC 2010}: 35H20, 35B53, 32V20, 53C17}
}

\author{Yibin Ren}

\maketitle

\begin{abstract}
	In this paper, we will give a horizontal gradient estimate of positive solutions of $\Delta_b u = - \lambda u$ on complete noncompact pseudo-Hermitian manifolds. As a consequence, we recapture the Liouville theorem of positive pseudo-harmonic functions on Sasakian manifolds with nonnegative pseudo-Hermitian Ricci curvature.
\end{abstract}

\section{Introduction}

Let $(M, \theta)$ be a pseudo-Hermitian manifold and $\nabla$ be the Tanaka-Webster connection.
The sub-Laplacian of a smooth function $u$ is defined by 
\begin{align*}
	\Delta_b u = \mbox{trace}_{G_\theta} \nabla_b d_b u
\end{align*}
where $\nabla_b d_b u$ is the restriction of $\nabla d u$ on $HM \times HM$.
It is a subelliptic operator in pseudo-Hermitian geometry and its local theories are close to elliptic operator (cf. \cite{folland1974estimates,jost1998subelliptic,rothschild1976hypoelliptic,strichartz1986sub}).
A smooth function is called pseudo-harmonic if its sub-Laplacian vanishes.

Cheng and Yau derived a well-known gradient estimate of positive harmonic functions on Riemannian manifolds.
\begin{theorem}[\cite{cheng1975differential,yau1975harmonic}]
	Suppose that $M$ is a $m$-dimensional complete manifold with $Ric \geq - (m-1) \kappa$ for some $\kappa \geq 0$. If $u$ is a positive harmonic function on a geodesic ball $B_{2R} (x_0)$, then 
	\begin{align*}
		\sup_{B_R (x_0)} \frac{|\nabla u|}{u} \leq C_m \left( \frac{1}{R} + \sqrt{\kappa} \right)
	\end{align*}
	where $C_m$ is a constant depending on $m$.
\end{theorem}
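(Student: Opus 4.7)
The plan is to execute the classical Bochner argument with a distance-based cut-off. First, since $u>0$, I set $f=\log u$, which converts $\Delta u=0$ into $\Delta f=-|\nabla f|^2$. Writing $w=|\nabla f|^2$ and applying the Bochner formula
\begin{align*}
\tfrac12 \Delta w = |\nabla^2 f|^2 + \langle \nabla f, \nabla \Delta f\rangle + \mathrm{Ric}(\nabla f,\nabla f),
\end{align*}
the hypothesis $\mathrm{Ric}\geq-(m-1)\kappa$, the identity $\Delta f=-w$, and the trace inequality $|\nabla^2 f|^2\geq (\Delta f)^2/m = w^2/m$ yield a differential inequality of the form
\begin{align*}
\tfrac12 \Delta w \geq \tfrac{w^2}{m} - \langle \nabla f, \nabla w\rangle - (m-1)\kappa\, w.
\end{align*}

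Next, I introduce a cut-off $\phi=\eta(r)$ where $r=d(\cdot,x_0)$ and $\eta$ equals $1$ on $[0,R]$ and vanishes on $[2R,\infty)$, chosen so that $|\eta'|^2/\eta$ and $|\eta''|$ are controlled by $C/R^2$. By the Laplacian comparison theorem under the Ricci bound, one obtains $\Delta\phi \geq -C_m(R^{-2}+\sqrt{\kappa}/R)$ in the distributional/barrier sense, and $|\nabla\phi|^2/\phi\leq C/R^2$. I then consider $F=\phi w$ and localize at an interior maximum $x^*\in B_{2R}(x_0)$, handling the cut-locus by Calabi's trick (approximating $x_0$ along a minimizing geodesic).

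At $x^*$ one has $\nabla F=0$ and $\Delta F\leq 0$, so multiplying the Bochner inequality by $\phi$ and inserting $\nabla w=-w\nabla\phi/\phi$ at $x^*$ produces, after absorbing the cross term $\phi\langle\nabla f,\nabla w\rangle$ via Cauchy–Schwarz and Young's inequality, an algebraic inequality of the form
\begin{align*}
\tfrac{1}{m}\,\phi^2 w^2(x^*) \leq C_m\Bigl(\tfrac{1}{R^2}+\kappa\Bigr)\,\phi w(x^*) + C_m\Bigl(\tfrac{1}{R^2}+\tfrac{\sqrt\kappa}{R}\Bigr)\phi w(x^*).
\end{align*}
Solving this quadratic in $\phi w(x^*)$ gives $\sup_{B_R}w\leq \phi w(x^*)\leq C_m(R^{-2}+\kappa)$, and taking square roots proves the stated estimate for $|\nabla u|/u=\sqrt{w}$.

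The main technical obstacle, as usual with this method, is the bookkeeping: one must choose the Young-type absorption constants so that a strictly positive multiple of $w^2/m$ survives after eliminating the $\langle\nabla f,\nabla w\rangle$ term, while simultaneously handling the weight $\phi$ (which appears to uneven powers on the two sides) and the Hessian of the non-smooth distance function. The cut-off derivatives must be balanced so that the resulting constant depends only on $m$, and the final dependence $\sqrt{\kappa}+1/R$ emerges only because the quadratic has the precise coefficient structure above.
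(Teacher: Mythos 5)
This theorem is quoted from Cheng--Yau and is not proved in the paper (it is cited as motivation), but your argument is the standard and correct proof: the $\log u$ substitution, Bochner formula with the trace inequality, Laplacian comparison for the cut-off, Calabi's trick at the cut locus, and the quadratic-in-$\phi w$ absorption all go through exactly as you describe and yield the stated constant structure $C_m(1/R+\sqrt{\kappa})$. It is also precisely the template the paper itself follows for its pseudo-Hermitian analogue (Lemma \ref{c-lem-bound} and Theorem \ref{c-thm-general}), with the CR Bochner formulas and the sub-Laplacian comparison \eqref{a-riemdiscprs} replacing their Riemannian counterparts.
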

As an consequence, Liouville theorem holds for positive harmonic functions on Riemannian manifolds with nonnegative Ricci curvature. 
This method is also very important for geometric and analytic objects, such as eigenvalues, eigenfunctions, heat kernel and Harnack inequality.
One can refer to \cite{li2012geometric} for these discussions.
Due to the similarity of harmonic functions, it is natural to consider the Liouville theorem for pseudo-harmonic functions on pseudo-Hermitian manifolds. 
The key point of such generalization is the analogue of Laplacian comparison theorem in pseudo-Hermitian manifolds.
The authors in \cite{Agrachev2015bishop,baudoin2017comparison,lee2013bishop} have studied the sub-Laplacian comparison theorem and Hessian comparison theorem of sub-Riemannian distance function (also called Carnot-Carath\'eodory distance) on Sasakian manifolds.
This was also explored by Chang, Kuo, Lin and Tie \cite{chang2015gradient} via a different method. 
They applied it to generalize Cheng-Yau's theorem and established the Liouville theorem for positive pseudo-harmonic functions on Sasakian manifolds with nonnegative pseudo-Hermitian Ricci curvature.

It is notable that Webster metric is a Riemannian metric which gives a Riemannian distance function.
The authors in \cite{chong2017harmonicnew} have estimated its sub-Laplacian on pseudo-Hermitian manifolds which is a weak version of sub-Laplacian comparison theorem, and deduced some Liouville theorem and existence theorem of pseudo-harmonic maps to regular balls of some Riemannian manifolds which is a generalization of harmonic case \cite{cheng1980liouville,choi1982liouville}.
Motivated by these, this paper will establish the horizontal gradient estimate of positive eigenfunctions of sub-Laplacian and estimate the upper bound of eigenvalues on complete noncompact pseudo-Hermitian manifolds in Theorem \ref{c-thm-general}.
Meanwhile, the result will be simplified for Sasakian manifolds in Corollary \ref{c-thm-sasakian}.
As an consequence, the Liouville theorem for positive pseudo-harmonic functions on Sasakian manifolds will be recaptured in Theorem \ref{c-thm-liouville}.

\section{Preliminary}

In this section, we present some basic notions of pseudo-Hermitian geometry. For details, the readers may refer to \cite{dragomir2006differential,webster1978pseudo}. Recall that a smooth manifold $M$ of real dimension ($2n+1$) is said to be a CR manifold if
there exists a smooth rank $n$ complex subbundle $T_{1,0} M \subset TM \otimes \mathbb{C}$ such that
\begin{gather}
T_{1,0} M \cap T_{0,1} M =0 \\
[\Gamma (T_{1,0} M), \Gamma (T_{1,0} M)] \subset \Gamma (T_{1,0} M) \label{b-integrable}
\end{gather}
where $T_{0,1} M = \overline{T_{1,0} M}$ is the complex conjugate of $T_{1,0} M$.
Equivalently, the CR structure may also be described by the real subbundle $HM = Re \: \{ T_{1,0} M \oplus T_{0,1}M \}$ of $TM$, called the horizontal bundle, which carries a almost complex structure $J : HM \rightarrow HM$ defined by $J (X+\overline{X})= i (X-\overline{X})$ for any $X \in T_{1,0} M$.
Since $HM$ is naturally oriented by the almost complex structure $J$, then $M$ is orientable if and only if
there exists a global nowhere vanishing 1-form $\theta$ such that $ HM = Ker (\theta) $.
Any such section $\theta$ is referred to as a pseudo-Hermitian structure on $M$. The space of all pseudo-Hermitian structure is 1-dimensional The Levi form $L_\theta $ of a given pseudo-Hermitian structure $\theta$ is defined by
$$L_\theta (X, Y ) = d \theta (X, J Y)  $$
for any $X, Y \in HM$. 
An orientable CR manifold $(M, HM, J)$ is called strictly pseudo-convex if $L_\theta$ is positive definite for some $\theta$.

When $(M, HM, J)$ is strictly pseudo-convex, there exists a pseudo-Hermitian structure $\theta$ such that $L_\theta$ is positive. The quadruple $( M, HM, J, \theta )$ is called a pseudo-Hermitian manifold. 
For simplicity, we denote it by $(M, \theta)$.
This paper is discussed in these pseudo-Hermitian manifolds.

For a pseudo-Hermitian manifold $(M, \theta)$, there exists a unique nowhere zero vector field $\xi$, called the Reeb vector field, transverse to $HM$ and satisfying
$\xi \lrcorner \: \theta =1, \ \xi \lrcorner \: d \theta =0$. 
There is a decomposition of the tangent bundle $TM$: 
\begin{align}
TM = HM \oplus \mathbb{R} \xi 
\end{align}
which induces the projection $\pi_H : TM \to HM$. Set $G_\theta = \pi_H^* L_\theta$. Since $L_\theta$ is a metric on $HM$, it is natural to define a Riemannian metric
\begin{align}
g_\theta = G_\theta + \theta \otimes \theta
\end{align}
which makes $HM$ and $\mathbb{R} \xi$ orthogonal. Such metric $g_\theta$ is called Webster metric, also denoted by $\la \cdot , \cdot \ra$.
By requiring that $J \xi=0$, the complex structure $J$ can be extended to an endomorphism of $TM$.
The integrable condition \eqref{b-integrable} guarantees that $g_\theta$ is $J$-invariant.

It is remarkable that $(M, HM, G_\theta)$ could also be viewed as a sub-Riemannian manifold which satisfies strong bracket generating hypothesis. The completeness is well settled under the Carnot-Carath\'eorody distance (cf. \cite{strichartz1986sub}). By definition, this distance is larger than Riemannian distance associated with the Webster metric $g_\theta$ which implies that sub-Riemannian completeness is stronger than Riemannian one. 
In this paper, a pseudo-Hermitian manifold $(M, \theta)$ is called complete if it is complete as a Riemannian manifold $(M, g_\theta)$.

On a pseudo-Hermitian manifold, there exists a canonical connection $\nabla$ preserving the horizontal bundle, the CR structure and the Webster metric. Moreover, its torsion satisfies
\begin{gather*}
T_{\nabla} (X, Y)= 2 d \theta (X, Y) \xi  \mbox{ and }T_{\nabla} (\xi, J X) + J T_{\nabla} (\xi, X) =0.
\end{gather*}

The pseudo-Hermitian torsion, denoted by $\tau$, is a symmetric and traceless tensor defined by $\tau (X) = T_\nabla (\xi, X)$ for any $X \in TM$ (cf. \cite{dragomir2006differential}).
Set $A = g_\theta ( \tau (X) , Y)$ for any $X, Y \in TM$.
A pseudo-Hermitian manifold is called Sasakian if $\tau \equiv 0$.
Sasakian geometry is very rich as the odd-dimensional analogous of K\"ahler geometry. We refer the readers to the book \cite{boyer2008sasakian} by Boyer and Galicki.

Let $R$ be the curvature tensor of the Tanaka-Webster connection. As the Riemannian curvature, $R$ satisfies
\begin{align*}
\langle R(X, Y) Z, W \rangle = - \langle R(X, Y) W, Z \rangle = - \langle R(Y, X) Z, W \rangle
\end{align*}
for any $X, Y, Z, W \in \Gamma (TM)$. 
Assume that $\{ \eta_\alpha \}_{\alpha=1}^n$ is a local unitary frame of $T_{1,0} M$.
Besides $R_{\bar{\alpha} \beta \lambda \bar{\mu}} = \langle R(\eta_\lambda, \eta_{\bar{\mu}}) \eta_\beta, \eta_{\bar{\alpha}} \rangle$, the other parts of $R$ are clear (cf. \cite{dragomir2006differential,webster1978pseudo}):
\begin{equation} \label{b-cur-tor}
	\begin{gathered}
	R_{\ba \beta \bl \bm}  = 2 i ( A_{\ba \bm} \delta_{\beta \bl} - A_{ \ba \bl} \delta_{\beta \bm} ) , \\
	R_{\ba \beta \lambda \mu} =2 i ( A_{\beta \mu} \delta_{\ba \lambda}- A_{\beta \lambda } \delta_{\ba \mu } ) , \\
	R_{\ba \beta 0 \bm} = A_{\ba \bm, \beta} , \\
	R_{ \ba \beta 0  \mu} = - A_{\mu \beta, \ba } , 
	\end{gathered}
\end{equation}
where $A_{\mu \beta, \bar{\alpha}}$ are the components of $\nabla A$. 
The pseudo-Hermitian Ricci operator $R_*$ is given by 
\begin{align}
R_* X = - i \sum_{\lambda=1}^n R(\eta_\lambda, \eta_{\bar{\lambda}}) JX.
\end{align}
Due to the first Bianchi identity $R_{\ba \beta \lambda \bm} = R_{\ba \lambda \beta \bm}$ (cf. \cite{webster1978pseudo}), the components of $R_*$ satisfies that 
$$ R_{\alpha \bar{\beta}} = R_{\bar{\beta} \alpha \lambda \bar{\lambda}} = R_{\bar{\beta} \lambda \alpha \bar{\lambda}}. $$

The sub-Laplacian of a smooth function $v$ is defined by 
\begin{align*}
	\Delta_b v = \mbox{trace}_{G_\theta} \nabla_b d_b v
\end{align*}
where $\nabla_b d_b v$ is the restriction of $\nabla d v$ on $HM \times HM$.
A smooth function is called pseudo-harmonic if its sub-Laplacian vanishes.
The CR Bochner formulas of functions were first derived by \cite{greenleaf1985first} for the estimate of first eigenvalue of sub-Laplacian on compact pseudo-Hermitian manifolds. 
The CR Bochner formulas of pseudo-harmonic maps were deduced in \cite{chang2013existence,Ren201447}.

\begin{lemma}[CR Bochner Formulas]
	Suppose that $(M^{2m+1}, \theta)$ is a pseudo-Hermitian manifold. Then any smooth function $v$ satisfies that 
	\begin{align}
		\frac{1}{2} \Delta_b |\nabla_b v|^2=& |\nabla_b d_b v|^2 + \la \nabla_b \Delta_b v, \nabla_b v \ra + 4 \langle \nabla_b v_0 , J \nabla_b v \rangle \nonumber \\
		& + \langle \left(R_* + 2 (m-2) \tau J \right) (\nabla_b v), \nabla_b v \rangle \label{b-bochner3} \\
		\frac{1}{2} \Delta_b |v_0|^2 =& |\nabla_b v_0|^2 + v_0 \cdot \nabla_\xi \Delta_b v   \nonumber \\
		& + 2 (\mbox{div } A) (\nabla_b v) \cdot v_0 + 2 \langle A, \nabla_b d_b v \rangle \cdot v_0 \label{b-bochner4}
	\end{align}
	where $v_0 = d v (\xi)$, $(\mbox{div A}) (X) = \mbox{trace}_{G_\theta} \nabla_{\bullet} A (X, \bullet)$ and $\nabla_b v$ is the horizontal gradient of $v$.
\end{lemma}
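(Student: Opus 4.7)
The plan is to carry out both computations in a local unitary frame $\{\eta_\alpha\}_{\alpha=1}^m$ of $T_{1,0}M$, writing the horizontal norm as $|\nabla_b v|^2 = 2\sum_\alpha v_\alpha v_{\ba}$ (with $v_\alpha = \eta_\alpha v$ and $v_{\ba}=\eta_{\ba} v$) and the sub-Laplacian as $\Delta_b = \sum_\alpha(\nabla_\alpha \nabla_{\ba} + \nabla_{\ba}\nabla_\alpha)$. Applying $\Delta_b$ to $|\nabla_b v|^2$ and expanding by the Leibniz rule yields the squared Hessian term $|\nabla_b d_b v|^2$ together with third-order terms of the form $v_\alpha v_{\ba\beta\bb}$, $v_\alpha v_{\ba\bb\beta}$ and their conjugates. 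The remaining work is to rearrange these third derivatives so that a contraction produces $\nabla_b \Delta_b v$, the exchange of indices generating curvature and torsion corrections.

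To perform the rearrangement I would invoke the commutation formulas for the Tanaka-Webster connection. At the Hessian level, the torsion identity $T_\nabla(X,Y) = 2 d\theta(X,Y)\xi$ gives an exchange rule of the form $v_{\alpha \bb} - v_{\bb \alpha} \propto \delta_{\alpha\beta} v_0$, and at the third-derivative level the corresponding formula for a $(1,0)$-form produces curvature components of types $R_{\alpha \bb \lambda \bm}$, $R_{\ba \beta \lambda \mu}$, etc. Substituting the explicit expressions \eqref{b-cur-tor}, the mixed-type components trace over the unitary frame into the Ricci tensor via the Bianchi-type identity $R_{\alpha\bb} = R_{\bb \lambda \alpha \bl}$ recorded in the preliminaries, delivering the term $\la R_*(\nabla_b v), \nabla_b v\ra$. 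The remaining pure-type torsion curvature components $R_{\ba\beta\lambda\bm}$ and $R_{\alpha\bb\lambda\mu}$ contribute multiples of $\tau J$, and the combinatorial count over the $m$ frame vectors, together with the symmetric and conjugate placements of $\nabla_\alpha, \nabla_{\ba}$ inside $\Delta_b$, accounts for the coefficient $2(m-2)$. Finally, the $v_0$ pieces produced by each Hessian commutation assemble into the cross term $4\la \nabla_b v_0, J \nabla_b v\ra$.

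For \eqref{b-bochner4}, since $v_0$ is a scalar function, the Leibniz rule gives
\begin{align*}
\frac{1}{2}\Delta_b |v_0|^2 = |\nabla_b v_0|^2 + v_0 \cdot \Delta_b v_0,
\end{align*}
so only $\Delta_b v_0$ needs to be rewritten in terms of $\nabla_\xi \Delta_b v$. I would compute the commutator $\Delta_b \nabla_\xi v - \nabla_\xi \Delta_b v$ by pushing $\nabla_\xi$ past each factor $\nabla_\alpha$ and $\nabla_{\ba}$ appearing in $\Delta_b$. The resulting $\xi$-horizontal commutator involves $\tau$ together with the curvature components $R_{\ba \beta 0 \bm} = A_{\ba\bm,\beta}$ and $R_{\ba\beta 0\mu} = -A_{\mu\beta,\ba}$ taken from \eqref{b-cur-tor}. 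These split naturally into two families: one carrying covariant derivatives of $A$, which traces into $2(\mbox{div } A)(\nabla_b v)$, and one pairing $A$ against the Hessian of $v$, which is precisely $2\la A, \nabla_b d_b v\ra$. Multiplying through by $v_0$ yields \eqref{b-bochner4}.

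The main obstacle is the combinatorial bookkeeping in the first identity rather than any genuinely deep step: tracking signs, keeping track of the factor $2$ from the symmetric and conjugate placements of $\nabla_\alpha, \nabla_{\ba}$ inside $\Delta_b$, and contracting the pure-type curvature components $R_{\ba\beta\lambda\bm}$, $R_{\alpha\bb\lambda\mu}$ against the frame Kronecker deltas without double-counting. Extracting the $\tau J$ contribution with its characteristic coefficient $2(m-2)$ is the delicate point that demands the most care and is where miscounts are easiest to make.
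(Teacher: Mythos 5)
Your proposal follows essentially the same route as the paper: expand $\Delta_b|\nabla_b v|^2$ in a local unitary frame via the Leibniz rule, commute the third-order derivatives using the CR Ricci identity together with the curvature--torsion components \eqref{b-cur-tor}, and collect the mixed components into $\la R_*(\nabla_b v),\nabla_b v\ra$, the torsion-type components into the $2(m-2)\tau J$ term, and the $v_0$ pieces into $4\la\nabla_b v_0, J\nabla_b v\ra$. Your sketch of \eqref{b-bochner4} via $\frac{1}{2}\Delta_b|v_0|^2=|\nabla_b v_0|^2+v_0\Delta_b v_0$ and the commutator of $\Delta_b$ with $\nabla_\xi$ is likewise the standard argument the paper leaves to the reader, so the proposal is correct and not materially different.
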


Since the proof is simple under the CR analogue of Ricci identity, that is if $  \sigma \in \Gamma ( \otimes^p T^* M )  $ and $ X_1 , \cdots , X_p \in \Gamma (TM) $, then 
\begin{align} \label{b-ricidentity}
	& ( \nabla^2 \sigma )   ( X_1 , \cdots , X_p ; X, Y ) - ( \nabla^2 \sigma )   ( X_1 , \cdots , X_p ; Y, X ) \nonumber \\
	&= \sigma  ( X_1 , \cdots , R(X, Y) X_i, \cdots,  X_p ) + \big( \nabla_{T_\nabla (X, Y) }  \sigma \big) ( X_1 , \cdots , X_p ),
\end{align}
we would recapture it here.

\begin{proof}
	Let's only prove \eqref{b-bochner3}. The equation \eqref{b-bochner4} is left to the readers. Suppose that $\{ \eta_\alpha \}_{\alpha=1}^n$ is a local unitary frame of $T_{1,0} M$. By definition, we have 
	\begin{align}
		\frac{1}{2} \Delta_b |\nabla_b v|^2 = (v_\alpha v_{\bar{\alpha}} )_{\beta \bar{\beta}} + (v_\alpha v_{\bar{\alpha}} )_{\bar{\beta} \beta } = v_{\alpha \beta \bar{\beta}} v_{\bar{\alpha}} + v_{\alpha \beta} v_{\bar{\alpha} \bar{\beta}} + v_{\alpha \bar{\beta}} v_{\bar{\alpha} \beta} + v_\alpha v_{\bar{\alpha} \beta \bar{\beta}} + \mbox{conj.} \label{b-3}
	\end{align}
	Using \eqref{b-cur-tor} and \eqref{b-ricidentity}, we have the following calculation:
	\begin{align*}
		v_{\alpha \beta \bar{\beta}} & = v_{\beta \alpha \bar{\beta}} \\
		& = v_{\beta \bar{\beta} \alpha} + R_{\alpha \bar{\gamma}} v_\gamma + 2 i  v_{\alpha 0} \\ 
		& = v_{\beta \bar{\beta} \alpha} + R_{\alpha \bar{\gamma}} v_\gamma + 2 i  v_{0 \alpha} - 2 i A_{\alpha \gamma} v_{\bar{\gamma}}, \numberthis \label{b-1}
	\end{align*}
	and 
	\begin{align*}
		v_{\bar{\alpha} \beta \bar{\beta}} & = (v_{\beta \bar{\alpha}} - 2i \delta_{\beta \bar{\alpha}} v_0)_{\bar{\beta}} = v_{\beta \bar{\alpha} \bar{\beta}} - 2 i v_{0 \bar{\alpha}} \\
		& = v_{\beta \bar{\beta} \bar{\alpha}} + R_{\bar{\gamma} \beta \bar{\alpha} \bar{\beta} } v_\gamma - 2 i v_{0 \bar{\alpha}} \\
		& = v_{\beta \bar{\beta} \bar{\alpha}} - 2i (m-1) A_{\bar{\gamma} \bar{\alpha}}  v_\gamma - 2 i v_{0 \bar{\alpha}}. \numberthis \label{b-2}
	\end{align*}
	Substituting \eqref{b-1} and \eqref{b-2} into \eqref{b-3}, we obtain \eqref{b-bochner3} with the following identities 
	\begin{align*}
		i (v_{\ba} v_{0 \alpha } - v_\alpha v_{0  \ba} ) = \langle \nabla_b v_0 , J \nabla_b v \rangle
	\end{align*}
	and
	\begin{align*}
		2 R_{\alpha \bb} v_{\ba} v_{\beta} - 2i (m-2) (v_\alpha v_\beta A_{\ba \bb}- v_{\ba} v_{\bb} A_{\alpha \beta}  ) = & \langle R_* (\nabla_b v), \nabla_b v \rangle + 2 (m-2) A (\nabla_b v, J \nabla_b v) \\
		= & \langle \left(R_* + 2 (m-2) \tau J \right) (\nabla_b v), \nabla_b v \rangle .
	\end{align*}
\end{proof}

\begin{lemma} \label{b-lem-bochner}
	Suppose that $(M^{2m+1}, \theta)$ is a pseudo-Hermitian manifold.
	\begin{align*}
	R_* + 2 (m-2) \tau J \geq -  k, \mbox{ and } |A|, | \mbox{div} A | \leq k_1,
	\end{align*}
	for some $k , k_1 \geq 0$ and $v \in C^\infty (M)$. Then for any $\epsilon \geq 0 , \epsilon_1 > 0$, 
	\begin{align}
		\Delta_b |\nabla_b v|^2 \geq &   \frac{1}{m} (\Delta_b v)^2 +  4m  |v_0|^2 + 2 |\pi_{(1,1)}^{\perp} \nabla_b d_b v|^2 \nonumber \\
		& + 2 \langle \nabla_b \Delta_b v , \nabla_b v \rangle - \epsilon_1 |\nabla_b v_0|^2 - (2 k + 16 \epsilon_1^{-1} ) |\nabla_b v|^2 \label{b-bochner1}
	\end{align}
	and
	\begin{align}
		\Delta_b |v_0|^2 \geq &  2 |\nabla_b v_0|^2 + 2 v_0 \cdot \nabla_\xi \Delta_b v  \nonumber \\
		& - 2 k_1 | \pi_{(1,1)}^{\perp} \nabla_b d_b v |^2 - 4 k_1 |v_0|^2 - 2 k_1 |\nabla_b v|^2
	\end{align}
\end{lemma}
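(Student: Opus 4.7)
The plan is to derive both inequalities by multiplying the two CR Bochner identities of the previous lemma by $2$ and then bounding every remaining term via (i) the curvature and torsion hypotheses, (ii) Young's inequality for scalar cross products, and (iii) a sharp orthogonal splitting of the horizontal Hessian using the Tanaka--Webster torsion.

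For \eqref{b-bochner1} I would multiply \eqref{b-bochner3} by $2$. The curvature piece is handled at once: $2\langle (R_* + 2(m-2)\tau J)(\nabla_b v), \nabla_b v\rangle \geq -2k|\nabla_b v|^2$ from the assumption. The cross term $8\langle \nabla_b v_0, J\nabla_b v\rangle$ is bounded below by $-\epsilon_1 |\nabla_b v_0|^2 - 16\epsilon_1^{-1}|\nabla_b v|^2$ via $2|ab| \leq (\epsilon_1/4)a^2 + (4/\epsilon_1)b^2$. The substantive step is the pointwise identity
\begin{align*}
|\nabla_b d_b v|^2 \;\geq\; \tfrac{1}{2m}(\Delta_b v)^2 + 2m|v_0|^2 + |\pi_{(1,1)}^{\perp}\nabla_b d_b v|^2,
\end{align*}
which I would prove in a local unitary frame $\{\eta_\alpha\}$. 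Writing $|\nabla_b d_b v|^2 = 2\sum|v_{\alpha\beta}|^2 + 2\sum|v_{\alpha\bar{\beta}}|^2$ and using the Ricci identity \eqref{b-ricidentity} with $T_\nabla(\eta_\alpha, \eta_{\bar{\beta}}) = -2i\delta_{\alpha\beta}\xi$ gives $v_{\alpha\bar{\beta}} - v_{\bar{\beta}\alpha} = 2i\delta_{\alpha\beta}v_0$, hence $\sum_\alpha v_{\alpha\bar{\alpha}} = \tfrac{1}{2}\Delta_b v + im v_0$. The scalar/trace part of the $(1,1)$-block, a multiple of $\delta_{\alpha\bar{\beta}}$, has squared norm exactly $\tfrac{1}{2m}(\Delta_b v)^2 + 2m|v_0|^2$, and the remaining traceless $(1,1)$ together with the $(2,0)+(0,2)$ components constitute $|\pi_{(1,1)}^{\perp}\nabla_b d_b v|^2$.

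For the second inequality I would multiply \eqref{b-bochner4} by $2$, preserving the $2|\nabla_b v_0|^2$ and $2v_0\nabla_\xi\Delta_b v$ terms. The key structural observation is that the pseudo-Hermitian torsion $\tau$ anticommutes with $J$, so $A$ has only $(2,0)+(0,2)$ components $A_{\alpha\beta}, A_{\bar{\alpha}\bar{\beta}}$; therefore $\langle A,\nabla_b d_b v\rangle = \langle A,\pi_{(1,1)}^{\perp}\nabla_b d_b v\rangle$ and $|\langle A,\nabla_b d_b v\rangle| \leq k_1|\pi_{(1,1)}^{\perp}\nabla_b d_b v|$. Applying $2|ab|\leq a^2+b^2$ to $4\langle A,\nabla_b d_b v\rangle v_0$ yields $-2k_1|\pi_{(1,1)}^{\perp}\nabla_b d_b v|^2 - 2k_1|v_0|^2$, and the same estimate applied to $4(\mbox{div}\,A)(\nabla_b v)\cdot v_0$, using $|(\mbox{div}\,A)(\nabla_b v)| \leq k_1|\nabla_b v|$, yields $-2k_1|\nabla_b v|^2 - 2k_1|v_0|^2$. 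Summing produces exactly the claimed lower bound.

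The main obstacle is the decomposition used in the first inequality: one must carefully track the factor of $2$ arising from complex conjugation, correctly identify $\pi_{(1,1)}$ as the scalar/trace projection, and extract both $(\Delta_b v)^2/m$ and $4m|v_0|^2$ simultaneously from the single complex trace identity $\sum_\alpha v_{\alpha\bar{\alpha}} = \tfrac{1}{2}\Delta_b v + im v_0$. This is precisely where the interaction between the sub-Laplacian and the transverse direction $v_0$, recorded by the Tanaka--Webster torsion, enters quantitatively; getting the two contributions out with the correct numerical coefficients is what makes the otherwise routine Bochner manipulation delicate.
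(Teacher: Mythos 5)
Your proposal is correct and follows essentially the same route as the paper: multiply the CR Bochner formulas by two, extract $\tfrac{1}{2m}(\Delta_b v)^2 + 2m|v_0|^2$ from the $(1,1)$-part of the horizontal Hessian via the commutation relation $v_{\alpha\bar{\beta}} - v_{\bar{\beta}\alpha} = 2i v_0 \delta_{\alpha\bar{\beta}}$, and control the curvature, torsion and cross terms exactly as you do, by the stated hypotheses and Cauchy's inequality. The only (harmless) discrepancy is that you fold the traceless $(1,1)$-block into $\pi_{(1,1)}^{\perp}\nabla_b d_b v$, whereas the paper's $\pi_{(1,1)}^{\perp}$ is the $(2,0)+(0,2)$ projection that you correctly use when estimating $\langle A, \nabla_b d_b v\rangle$; since this only means you discard a nonnegative term in the first inequality, the stated bounds still follow.
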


\begin{proof}
	The CR Ricci identity \eqref{b-ricidentity} gives that 
	\begin{align*}
		v_{\alpha \bar{\beta}} - v_{\bar{\beta} \alpha} = 2 i v_0 \delta_{\alpha \bar{\beta}}
	\end{align*}
	which shows that
	\begin{align}
	| \pi_{(1,1)} \nabla_b d_b v|^2 \geq & 2 \sum_{\alpha, \beta=1}^m v_{\alpha \bar{\alpha}} v_{\bar{\alpha} \alpha} 
	= \frac{1}{2} \sum_{\alpha=1}^m \big[ |v_{\alpha \ba} +v_{\ba \alpha} |^2 + |v_{\alpha \ba} - v_{\ba \alpha} |^2  \big] \nonumber \\ 
	\geq & \frac{1}{2m} \left| \sum_{\alpha=1}^m (v_{\alpha \bar{\alpha}} + v_{\bar{\alpha} \alpha}) \right|^2 + \frac{1}{2} \sum_{\alpha=1}^m |v_{\alpha \ba} -v_{\ba \alpha} |^2  \nonumber \\
	=& \frac{1}{2m} (\Delta_b v)^2 + 2m |v_0|^2. \label{b-commutation}
	\end{align}
	The other part can be obtained by Cauchy inequality.
\end{proof}

Comparison theorem is an powerful tool in Riemannian geometry. 
There are two distance functions in pseudo-Hermitian geometry.
One is sub-Riemannian distance function (also called Carnot-Carath\'eodory distance); the other is Riemannian distance function associated with the Webster metric. 
The smoothness of the latter is better than the former (cf. \cite{strichartz1986sub}). 
The sub-Laplacian comparison theorem of the former has been derived in Sasakian manifolds (cf. \cite{Agrachev2015bishop,baudoin2017comparison,chang2015gradient,lee2013bishop});
the latter has similar sub-Laplacian estimate (cf. \cite{chong2017harmonic}) which holds for general pseudo-Hermitian manifolds.
For our purpose, we will use the Riemannian distance function to construct cut-off functions.
The following sub-Laplacian comparison theorem is based on Riemannian Index Lemma (cf. Page 212 of \cite{do1992riemannian}) and the relationship between the pseudo-Hermitian curvature $R$ and the Riemannian curvature associated with the Webster metric (cf. Theorem 1.6 in Page 49 of \cite{dragomir2006differential}).
Let $r (x)$ be the Riemannian distance function from $x\in M$ to some fixed point $x_0 \in M$.

\constantnumber{cst-subcomparison}

\begin{theorem}[\cite{chong2017harmonic}]
	Suppose $(M^{2m +1}, \theta)$ is a complete pseudo-Hermitian manifold with 
	\begin{align*}
	R_* \geq - k, \mbox{ and } |A|, | \mbox{div} A | \leq k_1
	\end{align*}
	for some $k, k_1 \geq 0$. Then there exists $C_{\ref*{cst-subcomparison}}$ only depending on $m$ such that
	\begin{align}
	\Delta_b r \leq C_{\ref*{cst-subcomparison}} \left( \frac{1}{r} + \sqrt{1+ k + k_1 + k_1^2} \right) \label{a-riemdiscprs}
	\end{align}
	inside the cut locus of $x_0$.
\end{theorem}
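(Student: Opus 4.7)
The plan is to reduce to the classical Riemannian Laplacian comparison theorem applied to the Webster metric $g_\theta$ and then convert the Riemannian Laplacian into the sub-Laplacian. The first task is to bound the Ricci curvature of $g_\theta$ from below in terms of $k$ and $k_1$. Theorem 1.6 of \cite{dragomir2006differential} expresses the Levi-Civita curvature of $g_\theta$ as the Tanaka-Webster curvature plus correction terms built from $\tau$, $\nabla\tau$, $d\theta$ and the Reeb direction $\xi$. Tracing this identity over a local unitary frame $\{\eta_\alpha\}$ and using the first Bianchi identity $R_{\alpha\bb}=R_{\bb\alpha\lambda\bl}$ together with \eqref{b-cur-tor}, I expect the hypotheses $R_* \geq -k$ and $|A|,|\mathrm{div}\,A| \leq k_1$ to yield a pointwise lower bound
\begin{align*}
\mathrm{Ric}^{g_\theta}(X,X) \;\geq\; -\,C_m\,(1+k+k_1+k_1^2)\,\la X, X\ra
\end{align*}
for all $X\in TM$, with $C_m$ depending only on $m$.

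Once this lower bound is in hand, the classical Riemannian Laplacian comparison theorem, proved via the Riemannian Index Lemma on the $(2m+1)$-dimensional Riemannian manifold $(M, g_\theta)$ exactly as on page 212 of \cite{do1992riemannian}, gives
\begin{align*}
\Delta^{g_\theta} r \;\leq\; 2m\left( \frac{1}{r} + \sqrt{C_m'\,(1+k+k_1+k_1^2)} \right)
\end{align*}
inside the cut locus of $x_0$.

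It remains to convert $\Delta^{g_\theta} r$ into $\Delta_b r$. Choosing a local orthonormal frame $\{e_1,\ldots,e_{2m},\xi\}$ adapted to the splitting $TM=HM\oplus \mathbb{R}\xi$, one has
\begin{align*}
\Delta^{g_\theta} r = \sum_{i=1}^{2m} \nabla^{LC} dr(e_i, e_i) + \nabla^{LC} dr(\xi,\xi), \qquad \Delta_b r = \sum_{i=1}^{2m}\nabla dr(e_i,e_i).
\end{align*}
The connection-difference tensor $\nabla^{LC}-\nabla$ is pointwise constructed from $\tau$, $J$, $d\theta$ and $\xi$, so on horizontal arguments the Hessian difference is controlled by $C(1+k_1)|\nabla r|$, and $|\nabla r|\leq 1$. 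The vertical piece $|\nabla^{LC} dr(\xi,\xi)|$ is bounded analogously in terms of $|\tau|\leq k_1$. Summing, the total correction is $C_m(1+k_1)$, and combining with the previous display absorbs $\sqrt{1+k_1}$ into $\sqrt{1+k+k_1+k_1^2}$ to produce \eqref{a-riemdiscprs}.

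The principal technical obstacle is the first step. The Dragomir-Tomassini identity introduces $\nabla\tau$ and $|\tau|^2$-type contributions that are not \emph{a priori} all controlled by $\mathrm{div}\,A$ and $|A|$ individually; in particular, raw derivatives of $\tau$ in directions transverse to the horizontal trace would spoil the estimate. Verifying that after tracing the uncontrolled derivative terms cancel via the symmetries of $A$ and the first Bianchi identity, leaving only $R_*$, $\mathrm{div}\,A$ and $|A|^2$ in the Ricci lower bound, is the technical heart of the proof. Steps two and three are then essentially routine, with only careful bookkeeping of dimension constants required.
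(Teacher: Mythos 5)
The paper does not prove this statement---it is quoted from \cite{chong2017harmonic}---but it does tell you the intended method: apply the Riemannian Index Lemma \emph{directly to the horizontal trace} $\sum_{i=1}^{2m}\nabla^{LC}dr(e_i,e_i)$, choosing comparison fields along the minimal geodesic adapted to $HM$, and then convert Levi-Civita horizontal Hessians to Tanaka--Webster ones. Your route---full Ricci lower bound for $g_\theta$, full Riemannian Laplacian comparison, then subtract the vertical piece---is genuinely different, and it has two gaps that I do not think can be repaired.

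First, the pointwise bound $\mathrm{Ric}^{g_\theta}(X,X)\geq -C_m(1+k+k_1+k_1^2)\la X,X\ra$ for \emph{all} $X\in TM$ is not available under the stated hypotheses. When $X$ has a Reeb component, the Dragomir--Tomassini identity produces sectional-curvature terms of the type $\la R^{LC}(Y,\xi)\xi,Y\ra$ for a \emph{single} horizontal $Y$, and these contain $\la(\nabla_\xi\tau)Y,Y\ra$, i.e.\ derivatives of $A$ in the Reeb direction. Neither $|A|$ nor $|\mathrm{div}\,A|$ (a horizontal divergence) controls $\nabla_\xi\tau$ pointwise; the cancellation you hope for occurs only after taking the \emph{full horizontal trace}, where $\mathrm{trace}(\nabla_\xi\tau)=\nabla_\xi(\mathrm{trace}\,\tau)=0$ because $\tau$ is traceless. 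This is precisely why the cited proof only ever needs the partial trace $\sum_{i=1}^{2m}\la R^{LC}(e_i,\gamma')\gamma',e_i\ra$ over horizontal frames, and why it cannot be routed through the full Ricci tensor. Second, even granting a bound on $\Delta^{g_\theta}r$, your conversion step requires $-\nabla^{LC}dr(\xi,\xi)$ to be bounded \emph{above}, i.e.\ a \emph{lower} bound on $\nabla^{LC}dr(\xi^\perp,\xi^\perp)$ where $\xi^\perp$ is the component of $\xi$ orthogonal to $\nabla r$. A lower bound on the Hessian of a distance function in directions orthogonal to $\nabla r$ is a Hessian comparison statement requiring an \emph{upper} bound on sectional curvature, which is not assumed; inside the cut locus but near a conjugate point (think of $\cot(r)\to-\infty$ on the sphere) this term is arbitrarily negative, so $\Delta^{g_\theta}r-\nabla^{LC}dr(\xi,\xi)$ admits no upper bound of the required form. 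The fix for both problems is the same: run the index-form argument only over the $2m$ horizontal directions, with comparison fields $W_i(t)=\frac{t}{r}E_i(t)$ kept adapted to $HM$ along $\gamma$, so that the curvature integrand is always the controlled horizontal partial trace and the vertical Hessian never appears.
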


\section{Horizontal Gradient Estimates}

Suppose that $(M^{2m+1}, \theta)$ is a complete noncompact pseudo-Hermitian manifold.
\begin{align}
R_* + 2 (m-2) \tau J \geq -  k, \mbox{ and } |A|, | \mbox{div} A | \leq k_1, \label{c-assumption}
\end{align}
for some $k , k_1 \geq 0$. 
Let $r (x)$ be the Riemannian distance function from $x\in M$ to some fixed point $x_0 \in M$ and $B_R = B_R (x_0)$ is the Riemannian ball of radius $R$ centered at $x_0$.
Assume that $u$ is a positive solution of
\begin{align}
	\Delta_b u = - \lambda u, \mbox{ on } B_{2R}
\end{align}
for some $\lambda \geq 0$ and $R \geq 1$.
In this section, we will estimate $\dfrac{|\nabla_b u|^2}{u^2}$ on $B_R$.

Set $v = \ln u$ and then
\begin{align}
	\Delta_b v = \mbox{div } \nabla_b v = \mbox{div } \left( \frac{\nabla_b u}{u} \right) = \frac{\Delta_b u}{u} - \frac{|\nabla_b u|^2}{u^2} = - (\lambda + |\nabla_b v|^2). \label{c-1}
\end{align}
Hence 
\begin{align*}
	v_0 \cdot \nabla_\xi \Delta_b v = - v_0 \cdot \nabla_\xi |\nabla_b v|^2 = - 2 \langle v_0 \nabla_\xi \nabla_b v , \nabla_b v \rangle.
\end{align*}
For any $X \in HM$, we have 
\begin{align*}
	(\nabla_\xi v) (X) - (\nabla_X v) (\xi) = - dv (T_\nabla (\xi, X))
\end{align*}
which shows that
\begin{align*}
	\nabla_\xi \nabla_b v = \nabla_b v_0 - \tau (\nabla_b v).
\end{align*}
Hence
\begin{align*}
	v_0 \cdot \nabla_\xi \Delta_b v = -  \langle \nabla_b | v_0 |^2, \nabla_b v \rangle + 2 v_0 \cdot A (\nabla_b v, \nabla_b v).
\end{align*}
By the assumption \eqref{c-assumption} and Lemma \ref{b-lem-bochner}, we get
\begin{align}
	\Delta_b |v_0|^2 \geq &  2 |\nabla_b v_0|^2 - 2 \langle \nabla_b | v_0 |^2, \nabla_b v \rangle - 4 k_1 |\nabla_b v|^2 |v_0| \nonumber \\
	& - 2 k_1 | \pi_{(1,1)}^{\perp} \nabla_b d_b v |^2 - 4 k_1 |v_0|^2 - 2 k_1 |\nabla_b v|^2 \label{c-bochner2}
\end{align}

\constantnumber{cst-cutoff}
Choose a cut-off function $\varphi \in C^\infty ([0,\infty)) $ such that
\begin{align*}
\varphi \big|_{[0,1]} =1 , \quad  \varphi \big|_{[2, \infty)} =0,  \quad - C_{\ref*{cst-cutoff}}' | \varphi |^{\frac{1}{2}} \leq \varphi' \leq 0.
\end{align*}
By defining $ \chi (r) = \chi (\frac{r}{R})$, since $R \geq 1$, we find
\begin{align}
\frac{|\nabla_b \chi|^2}{\chi} \leq \frac{C_{\ref*{cst-cutoff}}}{R^2} , \quad \Delta_b \chi \geq - \frac{C_{\ref*{cst-cutoff}}}{R} \label{b-cutoff}
\end{align}
where $C_{\ref*{cst-cutoff}} $ depends on $ k, k_1 $. 
Set
\begin{align*}
	\Phi = |\nabla_b v|^2 + \mu_R \chi |v_0|^2
\end{align*}
where $ \mu_R $ will be determined later. 
For convenience, it will also be denoted by $\mu = \mu_R$.
But we should be careful about global estimates.

\begin{lemma} \label{c-lem-bochner}
	Suppose $k_1 \mu \leq  1$.
	If $\chi(x) \neq 0$ and $\Phi (x) \neq 0$,
	then at $x$, we have
	\begin{align} 
		\Delta_b \Phi \geq & \frac{1}{m} (\Delta_b v)^2 - 2 \langle \nabla_b \Phi, \nabla_b v \rangle + 2 \mu |v_0|^2 \langle \nabla_b \chi, \nabla_b v \rangle - 4 k_1 \mu \chi |\nabla_b v|^2 |v_0|  \nonumber \\
		& + \left(4m - 4 k_1 \mu \chi - 4 \mu \chi^{-1} |\nabla_b \chi|^2 + \mu \Delta_b \chi \right) |v_0|^2 \nonumber \\
		& -  \left[ 2 k + 2 k_1 \mu \chi + 16 ( \mu \chi)^{-1} \right] |\nabla_b v|^2 \label{b-8}
	\end{align}
\end{lemma}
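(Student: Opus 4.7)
The plan is to compute $\Delta_b \Phi$ directly from the product rule and then plug in the two Bochner-type inequalities already at hand, keeping careful track of the cross terms.

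First I would expand
\begin{align*}
\Delta_b \Phi = \Delta_b |\nabla_b v|^2 + \mu \chi \, \Delta_b |v_0|^2 + \mu |v_0|^2 \Delta_b \chi + 2 \mu \langle \nabla_b \chi, \nabla_b |v_0|^2 \rangle.
\end{align*}
For the first two terms I would substitute Lemma \ref{b-lem-bochner} (for $\Delta_b |\nabla_b v|^2$, with a free parameter $\epsilon_1$ to be chosen) and inequality \eqref{c-bochner2} (for $\Delta_b |v_0|^2$, already specialized to the equation $\Delta_b u = -\lambda u$). The $- 2 k_1 \chi \mu |\pi_{(1,1)}^\perp \nabla_b d_b v|^2$ term coming from the $|v_0|^2$-Bochner gets added to the $+2|\pi_{(1,1)}^\perp \nabla_b d_b v|^2$ term coming from the $|\nabla_b v|^2$-Bochner; under the hypothesis $k_1 \mu \leq 1$ together with $\chi \leq 1$, their sum is nonnegative and can be discarded.

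Next I would eliminate $\nabla_b \Delta_b v$ in favour of $\nabla_b \Phi$. Since $\Delta_b v = -\lambda - |\nabla_b v|^2$ by \eqref{c-1}, we get $2\langle \nabla_b \Delta_b v, \nabla_b v \rangle = -2 \langle \nabla_b |\nabla_b v|^2, \nabla_b v\rangle$, and using $\nabla_b |\nabla_b v|^2 = \nabla_b \Phi - \mu |v_0|^2 \nabla_b \chi - \mu \chi \nabla_b |v_0|^2$ produces the $-2\langle \nabla_b \Phi, \nabla_b v\rangle$ and $+2\mu|v_0|^2 \langle \nabla_b \chi, \nabla_b v\rangle$ that appear in the statement, together with a stray $+2\mu\chi \langle \nabla_b |v_0|^2, \nabla_b v\rangle$; this last piece cancels against the identical term (with opposite sign) produced by the $|v_0|^2$-Bochner inequality. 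This cancellation is the main structural point and is the whole reason the exponential substitution $v = \ln u$ was made.

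The remaining bookkeeping is to balance the $|\nabla_b v_0|^2$ terms. Choosing $\epsilon_1 = \mu \chi$ in Lemma \ref{b-lem-bochner} yields the claimed coefficient $16(\mu\chi)^{-1}$ on $|\nabla_b v|^2$ and leaves exactly $+\mu\chi |\nabla_b v_0|^2$ after adding $+2\mu\chi |\nabla_b v_0|^2$ from \eqref{c-bochner2}. That reserve is just enough to absorb the mixed term $2\mu \langle \nabla_b \chi, \nabla_b |v_0|^2\rangle = 4\mu v_0 \langle \nabla_b \chi, \nabla_b v_0\rangle$ via the Cauchy–Schwarz/AM–GM inequality
\begin{align*}
4\mu |v_0| |\nabla_b \chi| |\nabla_b v_0| \leq \mu \chi |\nabla_b v_0|^2 + 4 \mu \chi^{-1} |\nabla_b \chi|^2 |v_0|^2,
\end{align*}
which is exactly what produces the coefficient $-4\mu \chi^{-1} |\nabla_b \chi|^2$ in front of $|v_0|^2$ in \eqref{b-8}. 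Collecting what survives gives the stated inequality. The main potential pitfall is purely the bookkeeping: tracking which $|\nabla_b v_0|^2$, $\langle \nabla_b |v_0|^2, \nabla_b v\rangle$ and $|\pi_{(1,1)}^\perp \nabla_b d_b v|^2$ contributions cancel versus survive, and calibrating the unique choice $\epsilon_1 = \mu\chi$ that makes the AM–GM absorption of the $\nabla_b \chi \cdot \nabla_b v_0$ cross term match the coefficient $16(\mu\chi)^{-1}$ that appears in the statement.
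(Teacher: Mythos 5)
Your proposal is correct and follows essentially the same route as the paper: expand $\Delta_b\Phi$ by the product rule, insert \eqref{b-bochner1} with $\epsilon_1=\mu\chi$ and \eqref{c-bochner2}, discard the nonnegative combination of the $|\pi_{(1,1)}^{\perp}\nabla_b d_b v|^2$ terms using $k_1\mu\chi\leq 1$, regroup the gradient terms into $-2\langle\nabla_b\Phi,\nabla_b v\rangle+2\mu|v_0|^2\langle\nabla_b\chi,\nabla_b v\rangle$, and absorb the cross term $4\mu\langle\nabla_b\chi\otimes v_0,\nabla_b v_0\rangle$ by the same Cauchy inequality. All coefficients you track match \eqref{b-8}, so no gap.
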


\begin{proof}
	The estimate \eqref{c-bochner2} gives that
	\begin{align*}
		\Delta_b (\chi |v_0|^2) =& \chi \Delta_b |v_0|^2 + 2 \langle \nabla_b \chi, \nabla_b |v_0|^2 \rangle + |v_0|^2 \Delta_b \chi \\
		\geq & 2 \chi |\nabla_b v_0|^2 - 2 \langle \nabla_b (\chi | v_0 |^2), \nabla_b v \rangle + 2 |v_0|^2 \langle \nabla_b \chi, \nabla_b v \rangle - 4 k_1 \chi |\nabla_b v|^2 |v_0| \\ 
		& - 2 k_1 \chi | \pi_{(1,1)}^{\perp} \nabla_b d_b v |^2 - 4 k_1 \chi |v_0|^2 - 2 k_1 \chi |\nabla_b v|^2 \\ 
		& + 4 \langle \nabla_b \chi \otimes v_0, \nabla_b v_0 \rangle + |v_0|^2 \Delta_b \chi. \numberthis \label{c-reeb}
	\end{align*}
	Combing with \eqref{b-bochner1} and $\epsilon_1 = \mu \chi$, we have
	\begin{align*}
		\Delta_b \Phi =& \Delta_b (|\nabla_b v|^2 + \mu \chi |v_0|^2) \\
		\geq &  4 \mu \langle \nabla_b \chi \otimes v_0, \nabla_b v_0 \rangle +  \mu \chi  |\nabla_b v_0|^2 \\
		& + \frac{1}{m} (\Delta_b v)^2 - 2 \langle \nabla_b \Phi, \nabla_b v \rangle + 2 \mu |v_0|^2 \langle \nabla_b \chi, \nabla_b v \rangle - 4 k_1 \mu \chi |\nabla_b v|^2 |v_0| \\
		&+ 4 m |v_0|^2 - 4 k_1 \mu \chi |v_0|^2 + \mu \Delta_b \chi |v_0|^2\\
		& - \left[ 2 k + 2 k_1 \mu \chi + 16 (\epsilon \mu \chi)^{-1} \right] |\nabla_b v|^2  \numberthis \label{b-7}
	\end{align*}
	The proof is finished by Cauchy inequality
	\begin{align*}
		4 \mu \langle \nabla_b \chi \otimes v_0, \nabla_b v_0 \rangle \geq - \mu \chi |\nabla_b v_0|^2 - 4 \mu \chi^{-1} |\nabla_b \chi|^2 |v_0|^2.
	\end{align*}
	
\end{proof}

Since $\chi \Phi =0$ on $\partial B_{2 R}$, then the maximum point $x_\mu$ must lie in $B_{2 R}$.
We can assume that $x_\mu$ is inside the cut locus of $x_0$ and thus the Riemannian distance function $r$ is smooth at $x_\mu$. (Otherwise we use S.Y. Cheng's method \cite{cheng1980liouville} to modify it.) 
Set
\begin{align*}
	P = \chi |\nabla_b v|^2, Q = \chi^2 |v_0|^2, S= \chi \Phi = P + \mu Q
\end{align*}
and 
\begin{align*}
	P_\mu = P (x_\mu) , Q_\mu = Q (x_\mu) , S_\mu = S (x_\mu) .
\end{align*}

\constantnumber{cst-cutoff-2}
\begin{lemma}
	If the maximum value of $\chi \Phi$ is nonzero, that is $S_\mu \neq 0$, then for any $\mu$ with $k_1 \mu \leq 1$,
	\begin{align*}
		0 \geq & \frac{1}{m} P_\mu^2 - \left[ 2 k + 2 k_1 \mu + 16 \mu^{-1} + C_{\ref*{cst-cutoff-2}} R^{-1} - \frac{2}{m} \sigma \lambda \right] P_\mu + \frac{1}{m} \sigma^2 \lambda^2 \\
		& - C_{\ref*{cst-cutoff-2}}^{\frac{1}{2}} P_\mu^{\frac{3}{2}} R^{-\frac{1}{2}}  - 4 k_1 \mu P_\mu Q_\mu^{\frac{1}{2}} \\
		& + \left( 4 m  - 4 k_1 \mu  - 2 C_{\ref*{cst-cutoff-2}} \mu R^{-1} - 2 \mu C_{\ref*{cst-cutoff-2}}^{\frac{1}{2}} R^{-\frac{1}{2}} P_\mu^{\frac{1}{2}} \right) Q_\mu \numberthis \label{c-2}
	\end{align*}
	where $C_{\ref*{cst-cutoff-2}}$ only depends on $C_{\ref*{cst-cutoff}}$ and
	\begin{align*}
		\sigma = \sigma_R = \frac{\sup_{B_{R}} (|\nabla_b v|^2 + \mu_R |v_0|^2)}{\sup_{B_{2R}} (|\nabla_b v|^2 + \mu_R |v_0|^2)} \in [0,1].
	\end{align*}
	In particular, if $k_1 \mu \leq \varepsilon \leq \frac{1}{2 m} $, then 
	\begin{align*}
		0 \geq & \left( \frac{1}{m} -\varepsilon \right) P_\mu^2 - \left[ 2 k + 2 k_1 \mu + 16 \mu^{-1} + \varepsilon^{-1} C_{\ref*{cst-cutoff-2}} R^{-1} - \frac{2}{m} \sigma \lambda \right] P_\mu + \frac{1}{m} \sigma^2 \lambda^2 \\
		& + \left( 4 m - 12 \varepsilon  - 2 C_{\ref*{cst-cutoff-2}} \mu R^{-1} - 2 \mu C_{\ref*{cst-cutoff-2}}^{\frac{1}{2}} R^{-\frac{1}{2}} P_\mu^{\frac{1}{2}} \right) Q_\mu \numberthis \label{c-3}
	\end{align*}
\end{lemma}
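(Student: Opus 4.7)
The plan is to apply the second-derivative test to $\chi\Phi$ at its maximum point $x_\mu \in B_{2R}$ and feed the output into the Bochner-type inequality \eqref{b-8}. Since $x_\mu$ is an interior maximum (assumed inside the cut locus, otherwise modified via S.Y.\ Cheng's trick), one has $\nabla_b(\chi\Phi)(x_\mu)=0$ and $\Delta_b(\chi\Phi)(x_\mu)\leq 0$. Expanding the Laplacian as $\chi\Delta_b\Phi+2\langle\nabla_b\chi,\nabla_b\Phi\rangle+\Phi\Delta_b\chi$ and substituting $\nabla_b\Phi=-(\Phi/\chi)\nabla_b\chi$ yields the key comparison
\begin{align*}
\chi\Delta_b\Phi \leq \frac{2\Phi}{\chi}|\nabla_b\chi|^2 - \Phi\Delta_b\chi.
\end{align*}

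Next I would multiply \eqref{b-8} by $\chi$ and combine with the above. For the leading $\tfrac{1}{m}(\Delta_b v)^2$ term, I use the equation $\Delta_b v=-(\lambda+|\nabla_b v|^2)$ to write $\chi(\Delta_b v)^2=(\chi\lambda+P)^2/\chi$ at $x_\mu$. The crucial observation is $\chi(x_\mu)\geq\sigma$, which follows from $\sigma\cdot\sup_{B_{2R}}\Phi\leq\sup_{B_R}\Phi=\sup_{B_R}\chi\Phi\leq S_\mu=\chi(x_\mu)\Phi(x_\mu)$; together with $\chi(x_\mu)\leq 1$ this gives $\chi(\Delta_b v)^2\geq(P_\mu+\sigma\lambda)^2$, producing the block $\tfrac{1}{m}P_\mu^2+\tfrac{2}{m}\sigma\lambda P_\mu+\tfrac{1}{m}\sigma^2\lambda^2$ of \eqref{c-2}.

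The remainder is Cauchy--Schwarz bookkeeping. The term $-2\chi\langle\nabla_b\Phi,\nabla_b v\rangle$ becomes $2\Phi\langle\nabla_b\chi,\nabla_b v\rangle$, which after invoking $|\nabla_b\chi|\leq(C^{1/2}/R)\chi^{1/2}$ and $|\nabla_b v|^2=P_\mu/\chi(x_\mu)$ splits into the mixed $P_\mu^{3/2}$ and $\mu Q_\mu P_\mu^{1/2}$ contributions of \eqref{c-2}. The remaining Bochner coefficients $-(2k+2k_1\mu\chi+16(\mu\chi)^{-1})$ and $(4m-4k_1\mu\chi-4\mu|\nabla_b\chi|^2/\chi+\mu\Delta_b\chi)$ give the stated factors of $P_\mu$ and $Q_\mu$ after applying the cutoff bounds $|\nabla_b\chi|^2/\chi\leq C/R^2$ and $\Delta_b\chi\geq-C/R$, together with $\chi\leq 1$ and $k_1\mu\leq 1$.

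For \eqref{c-3}, I apply Young's inequality to the awkward $C^{1/2}P_\mu^{3/2}R^{-1/2}$ term, splitting it as $\varepsilon P_\mu^2+(\varepsilon^{-1}CR^{-1}/4)P_\mu$, which shrinks the quadratic coefficient from $1/m$ to $1/m-\varepsilon$ and enlarges the linear coefficient by $\varepsilon^{-1}CR^{-1}$. The term $-4k_1\mu P_\mu Q_\mu^{1/2}$ is bounded by $-4\varepsilon P_\mu Q_\mu^{1/2}$ using $k_1\mu\leq\varepsilon$, and Young's inequality again absorbs it into the $-12\varepsilon$ correction on the $Q_\mu$ coefficient. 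The main obstacle is the careful tracking of cross terms so that the $\sigma$-factor on $\lambda$ survives intact; this is precisely what allows the resulting quadratic-in-$P_\mu$ inequality to yield simultaneous upper bounds on both $\lambda$ and $|\nabla_b u|/u$ in the theorem that follows.
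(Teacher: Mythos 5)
Your proposal follows the paper's proof essentially step for step: the same maximum-principle setup at $x_\mu$ (with the Cheng modification off the cut locus), the same observation $\sigma \le \chi(x_\mu) \le 1$ used to extract the block $\frac{1}{m}(\sigma\lambda+P_\mu)^2$ from $\chi^2(\Delta_b v)^2=(\chi\lambda+P)^2$, and the same Cauchy--Schwarz bookkeeping of the cutoff terms to produce the $P_\mu^{3/2}$ and $\mu Q_\mu P_\mu^{1/2}$ contributions. The first inequality of the lemma is therefore obtained exactly as in the paper.

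The one place where your accounting does not close as literally written is the passage to the second inequality. You spend the entire $\varepsilon P_\mu^2$ allowance on the term $C_{\ref*{cst-cutoff-2}}^{1/2}P_\mu^{3/2}R^{-1/2}$ and then assert that $4k_1\mu P_\mu Q_\mu^{1/2}\le 4\varepsilon P_\mu Q_\mu^{1/2}$ is ``absorbed into the $-12\varepsilon$ correction on the $Q_\mu$ coefficient.'' A term of the form $P_\mu Q_\mu^{1/2}$ cannot be dominated by a multiple of $Q_\mu$ plus terms of lower order in $P_\mu$: any Young splitting of it necessarily produces a $P_\mu^2$ contribution, and you have no quadratic budget left. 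The paper splits the budget evenly, using
\begin{align*}
C_{\ref*{cst-cutoff-2}}^{\frac{1}{2}} P_\mu^{\frac{3}{2}} R^{-\frac{1}{2}} \leq \frac{\varepsilon}{2} P_\mu^2 + \frac{\varepsilon^{-1}}{2} C_{\ref*{cst-cutoff-2}} R^{-1} P_\mu
\quad\mbox{and}\quad
4 \varepsilon P_\mu Q_\mu^{\frac{1}{2}} \leq \frac{\varepsilon}{2} P_\mu^2 + 8 \varepsilon Q_\mu,
\end{align*}
which together yield precisely the coefficients $\frac{1}{m}-\varepsilon$ and $4m-12\varepsilon$ (the latter combining $-4k_1\mu\ge-4\varepsilon$ with the $-8\varepsilon$ above). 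This is a two-line repair rather than a conceptual gap, but as stated your second Young step would fail.
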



\begin{proof}
	Using Lemma \ref{c-lem-bochner} and \eqref{c-1}, we can do the following calculation
	\begin{align} 
		\chi \Delta_b ( \chi \Phi) & \geq   \frac{1}{m} (\chi \lambda + \chi |\nabla_b v|^2)^2 + 2 \langle \nabla_b \chi, \nabla_b (\chi \Phi) \rangle - 2 \chi^{-1} |\nabla_b \chi|^2 (\chi \Phi) + (\chi \Phi) \Delta_b \chi \nonumber \\
		& - 2 \chi \langle \nabla_b (\chi \Phi), \nabla_b v \rangle + 2 \langle \nabla_b \chi, \nabla_b v \rangle (\chi \Phi) + 2 \mu \chi^2 |v_0|^2 \langle \nabla_b \chi, \nabla_b v \rangle - 4 k_1 \mu \chi^3 |\nabla_b v|^2 |v_0|  \nonumber \\
		& + \left(4 m - 4 k_1 \mu \chi - 4 \mu \chi^{-1} |\nabla_b \chi|^2 + \mu \Delta_b \chi \right) \chi^2 |v_0|^2 \nonumber \\
		& -  \left[ 2 k \chi + 2 k_1 \mu \chi^2 + 16 \mu^{-1} \right] \chi |\nabla_b v|^2  . \label{c-9}
	\end{align}	
	Note that
	\begin{align*}
		\sup_{B_R} \Phi \leq \chi (x_{\mu}) \Phi (x_{\mu}) \leq \chi (x_{\mu}) \sup_{B_{2R}} \Phi \leq \chi (x_{\mu}) \sup_{B_{2R}} (|\nabla_b v|^2 + \mu |v_0|^2).
	\end{align*}
	Hence $\sigma \leq \chi (x_\mu) \leq 1$.
	By the assumption that $x_\mu$ is a maximum point, 
	\begin{align*}
		\nabla_b (\chi \Phi) (x_\mu) =0, \Delta_b (\chi \Phi) (x_\mu) \leq 0.
	\end{align*}
	Then combing with \eqref{b-cutoff} and Cauchy inequality, the estimate \eqref{c-9} can be rewritten at $x_\mu$ as
	\begin{align*}
		0 \geq  & \frac{1}{m} (\sigma \lambda + P_\mu)^2 - C_{\ref*{cst-cutoff-2}} R^{-1} S_\mu - C_{\ref*{cst-cutoff-2}}^{\frac{1}{2}} R^{-\frac{1}{2}} P_\mu^{\frac{1}{2}} S_\mu - \mu C_{\ref*{cst-cutoff-2}}^{\frac{1}{2}} R^{-\frac{1}{2}} P_\mu^{\frac{1}{2}} Q_\mu - 4 k_1 \mu P_\mu Q_\mu^{\frac{1}{2}}  \nonumber \\
		& + \left(4 m  - 4 k_1 \mu  - C_{\ref*{cst-cutoff-2}} \mu R^{-1} \right) Q_\mu  -  \left[ 2 k  + 2 k_1 \mu  + 16 \mu^{-1} \right] P_\mu 
	\end{align*}
	which leads the conclusion \eqref{c-2}.

	The conclusion \eqref{c-3} is due to 
	\begin{align*}
		4 k_1 \mu P_\mu Q_\mu^{\frac{1}{2}} & \leq 4 \varepsilon P_\mu Q_\mu^{\frac{1}{2}} \leq \frac{\varepsilon}{2} P_\mu^2 + 8 \varepsilon Q_\mu \\
		C_{\ref*{cst-cutoff-2}}^{\frac{1}{2}} P_\mu^{\frac{3}{2}} R^{-\frac{1}{2}} &\leq \frac{\varepsilon}{2} P_\mu^2 + \frac{\varepsilon^{-1}}{2} C_{\ref*{cst-cutoff-2}} R^{-1} P_\mu
	\end{align*}
\end{proof}

Now we begin to estimate $\nabla_b v$.
Let 
\begin{align*}
	a =\max_{B_{2 R}} P = \max_{B_{2 R}} \chi |\nabla_b v|^2
\end{align*}
which is independent of $\mu$.

\begin{lemma} \label{c-lem-bound}
	Suppose $(M^{2m +1}, \theta)$ is a complete noncompact pseudo-Hermitian manifold with
	\begin{align*}
		R_* + 2 (m-2) \tau J \geq - k, \mbox{ and } |A|, |\mbox{div } A| \leq k_1
	\end{align*}
	for some $k, k_1 \geq 0$. If $u$ is a positive solution of 
	\begin{align}
		\Delta_b u = - \lambda u, \mbox{ on } B_{2 R}
	\end{align}
	for some $\lambda \geq 0$ and $R \geq 1$, then
	\begin{align}
		\max_{B_R} \frac{|\nabla_b u|}{u} & \leq C(k, k_1) (1 + R^{- \frac{1}{2}}) \\ 
		\max_{B_R} \frac{|u_0|}{u} & \leq C(k, k_1) (1 + R^{-\frac{1}{2}})
	\end{align}
	where the constant $C(k,k_1)$ only depends on $m, k$ and $k_1$.
\end{lemma}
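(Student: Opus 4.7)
The strategy is a Cheng--Yau type maximum principle argument applied to the cut-off function $\chi\Phi$, using \eqref{c-3} as the key pointwise inequality at the maximum point. I take $\mu_R=\mu_0$ to be a constant depending only on $m$ and $k_1$, chosen so that $k_1\mu_0\leq\varepsilon:=1/(4m)$; keeping $\mu_0$ bounded away from both $0$ and $\infty$ is essential because the final bound on $S_\mu:=\max_{B_{2R}}\chi\Phi$ will give both $|\nabla_b v|^2\leq S_\mu$ and $|v_0|^2\leq S_\mu/\mu_0$ on $B_R$, which after taking square roots and using $(1+R^{-1})^{1/2}\leq 1+R^{-1/2}$ for $R\geq 1$ yields the claimed rate.

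Let $x_\mu$ be a maximum point of $\chi\Phi$, handled via S.Y.~Cheng's modification so that the distance $r$ is smooth there. A first observation about \eqref{c-3} is that the $\sigma\lambda$-contributions $(2\sigma\lambda/m)P_\mu$ and $\sigma^2\lambda^2/m$ enter with nonnegative sign on the right-hand side and may be discarded---this is precisely why the final constant can be taken independent of $\lambda$. A second case-split is on the sign of the coefficient of $Q_\mu$, namely $E-FP_\mu^{1/2}$ with $E:=4m-12\varepsilon-2C_{\ref*{cst-cutoff-2}}\mu_0/R$ and $F:=2\mu_0C_{\ref*{cst-cutoff-2}}^{1/2}R^{-1/2}$.

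In the favorable case $E\geq FP_\mu^{1/2}$, dropping the nonnegative $Q_\mu$-term from \eqref{c-3} gives the clean quadratic $(1/m-\varepsilon)P_\mu^2\leq B_1P_\mu$, where $B_1:=2k+2k_1\mu_0+16/\mu_0+\varepsilon^{-1}C_{\ref*{cst-cutoff-2}}/R$, yielding $P_\mu\leq mB_1/(1-m\varepsilon)\leq C(m,k,k_1)(1+R^{-1})$. Plugging back, the inequality $(E-FP_\mu^{1/2})Q_\mu\leq B_1P_\mu$ together with the lower bound $E-FP_\mu^{1/2}\geq c(m,k,k_1)>0$ (which follows from the just-derived bound on $P_\mu$ and from $F=O(R^{-1/2})$) gives $Q_\mu\leq C(m,k,k_1)(1+R^{-1})$, hence the same for $\mu_0Q_\mu$. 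In the unfavorable case $FP_\mu^{1/2}>E$, the bad contribution $(FP_\mu^{1/2}-E)Q_\mu$ is absorbed via the Young inequality $FP_\mu^{1/2}Q_\mu\leq F^2P_\mu/(2\gamma)+\gamma Q_\mu^2/2$ combined with the trivial estimate $Q_\mu\leq S_\mu/\mu_0$, converting \eqref{c-3} into a self-referential quadratic in $S_\mu$ that, with appropriate choice of $\gamma$, again yields $S_\mu\leq C(m,k,k_1)(1+R^{-1})$.

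Combining these bounds, $\sup_{B_R}(|\nabla_b v|^2+\mu_0|v_0|^2)\leq S_\mu\leq C(m,k,k_1)(1+R^{-1})$ since $\chi\equiv 1$ on $B_R$, and the two separate conclusions follow as indicated. The main technical obstacle is the unfavorable case: absorbing the $Q_\mu^2$ term back into $S_\mu^2/\mu_0^2$ and closing the self-referential quadratic without losing the correct $1+R^{-1}$ rate requires careful matching of $\gamma$, $\mu_0$, and $\varepsilon$, which is where most of the bookkeeping resides.
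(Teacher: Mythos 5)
Your overall frame (maximum principle for $\chi\Phi$, feeding $\nabla_b(\chi\Phi)=0$ and $\Delta_b(\chi\Phi)\leq 0$ into \eqref{c-3}, discarding the nonnegative $\lambda$-terms) matches the paper, but there is a genuine gap at the decisive point: your choice of $\mu_R=\mu_0$ as a constant depending only on $m,k_1$. With a constant $\mu_0$, the coefficient of $Q_\mu$ in \eqref{c-3} contains $-2\mu_0 C_{\ref*{cst-cutoff-2}}^{1/2}R^{-1/2}P_\mu^{1/2}$ with $P_\mu$ unknown, which forces your case split, and the unfavorable case cannot be closed the way you describe. Since $v_0=dv(\xi)$ is the derivative in the Reeb direction, there is no a priori control of $Q_\mu$ by $P_\mu$; when the coefficient of $Q_\mu$ is negative, the inequality \eqref{c-3} is satisfied by arbitrarily large $P_\mu$ and $Q_\mu$ simultaneously and carries no information. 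Your proposed fix via Young's inequality, $FP_\mu^{1/2}Q_\mu\leq F^2P_\mu/(2\gamma)+\gamma Q_\mu^2/2$, makes this worse: it introduces a term $-\gamma Q_\mu^2/2$ on the right-hand side, and the only quadratic term available with a favorable sign is $(\frac{1}{m}-\varepsilon)P_\mu^2$, not $(\frac{1}{m}-\varepsilon)S_\mu^2$; bounding $Q_\mu^2$ by $S_\mu^2/\mu_0^2$ only reinserts $Q_\mu^2$ on the wrong side. The "self-referential quadratic in $S_\mu$" you invoke does not exist in a closable form. (A secondary issue: even in your favorable case, $E-FP_\mu^{1/2}\geq 0$ is not a quantitative lower bound, so extracting $Q_\mu\leq C$ requires a strict margin you have not arranged.)

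The paper's key idea, which your proposal is missing, is to let $\mu_R$ depend on the unknown itself: with $a=\max_{B_{2R}}\chi|\nabla_b v|^2$ one sets
\begin{align*}
\mu_R^{-1}=(k_1+1)\varepsilon^{-1}+2\left(C_{\ref*{cst-cutoff-2}}R^{-1}+C_{\ref*{cst-cutoff-2}}^{\frac{1}{2}}R^{-\frac{1}{2}}a^{\frac{1}{2}}\right),
\end{align*}
so that $2C_{\ref*{cst-cutoff-2}}\mu R^{-1}+2\mu C_{\ref*{cst-cutoff-2}}^{1/2}R^{-1/2}P_\mu^{1/2}\leq 1$ automatically (since $P_\mu\leq a$) and the coefficient of $Q_\mu$ is $\geq m$ unconditionally; your unfavorable case simply never occurs. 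The price is that $\mu_R^{-1}$, and hence the resulting bounds on $P_\mu$ and $\mu Q_\mu$, contain $a^{1/2}$, so one ends with a self-referential inequality of the benign form $a\leq S_\mu\leq A+B a^{1/2}$ with $B=O(R^{-1/2})$ in the \emph{single} scalar $a$, which closes to give $a^{1/2}\leq C(k,k_1)(1+R^{-1/2})$. That single-variable bootstrap is the legitimate version of the self-referential argument you were reaching for; without the $a$-dependent choice of $\mu_R$ the two-variable inequality in $(P_\mu,Q_\mu)$ does not determine either quantity.
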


\begin{remark}
	The reason of the assumption of $R \geq 1$ is that we are more interested in the global estimates. For the case $R < 1$, one could also get a similar lemma as Lemma \ref{c-lem-bound} via some modification of \eqref{b-cutoff}.
\end{remark}

\begin{proof}
	The conclusions are obvious if the maximum value of $\chi \Phi$ is zero. Hence we assume that it is nonzero, that is $S_\mu \neq 0$.
	Choose 
	\begin{align*}
		\varepsilon = \frac{1}{6 m}
	\end{align*}
	and
	\begin{align}
		\mu^{-1} = \mu_R^{-1} = ( k_1 + 1) \varepsilon^{-1} + 2 \left( C_{\ref*{cst-cutoff-2}} R^{-1} + C_{\ref*{cst-cutoff-2}}^{\frac{1}{2}} R^{-\frac{1}{2}} a^{\frac{1}{2}} \right) 
	\end{align}
	which makes
	\begin{align*}
		4 m - 12 \varepsilon  - 2 C_{\ref*{cst-cutoff-2}} \mu R^{-1} - 2 \mu C_{\ref*{cst-cutoff-2}}^{\frac{1}{2}} R^{-\frac{1}{2}} P_\mu^{\frac{1}{2}} \geq m.
	\end{align*}
	Hence \eqref{c-3} implies that 
	\constantnumber{cst-1}
	\begin{align}
		0 \geq & \frac{5}{6 m}  P_\mu^2 - C_{\ref*{cst-1}} \left[  k +k_1 + 1 + C_{\ref*{cst-cutoff-2}} R^{-1} +  C_{\ref*{cst-cutoff-2}}^{\frac{1}{2}} R^{-\frac{1}{2}} a^{\frac{1}{2}} \right] P_\mu  + m Q_\mu,
	\end{align}
	where $C_{\ref*{cst-1}}$ only depends on $m$.
	Then we find
	\begin{align}
		P_\mu & \leq \frac{6m}{5} C_{\ref*{cst-1}} \left[  k +k_1 + 1 + C_{\ref*{cst-cutoff-2}} R^{-1} +  C_{\ref*{cst-cutoff-2}}^{\frac{1}{2}} R^{-\frac{1}{2}} a^{\frac{1}{2}} \right] \\
		Q_\mu & \leq  \frac{3}{10} C_{\ref*{cst-1}}^2 \left[  k +k_1 + 1 + C_{\ref*{cst-cutoff-2}} R^{-1} +  C_{\ref*{cst-cutoff-2}}^{\frac{1}{2}} R^{-\frac{1}{2}} a^{\frac{1}{2}} \right]^2
	\end{align}
	One can easily check that 
	\begin{align*}
		\mu^{-1}  \bigg[  k +k_1 + 1 + C_{\ref*{cst-cutoff-2}} R^{-1} +  C_{\ref*{cst-cutoff-2}}^{\frac{1}{2}} R^{-\frac{1}{2}} a^{\frac{1}{2}} \bigg]  \leq 6m (k + k_1 + 1) + \frac{1}{2}
	\end{align*}
	which yields that 
	\begin{align*}
		a \leq S_\mu = P_\mu +  \mu Q_\mu \leq & \left[ \frac{6m}{5} C_{\ref*{cst-1}} + \frac{3}{10} C_{\ref*{cst-1}}^2 \left( 6m (k + k_1 + 1) + \frac{1}{2} \right) \right]\\
		& \times \left[  k +k_1 + 1 + C_{\ref*{cst-cutoff-2}} R^{-1} +  C_{\ref*{cst-cutoff-2}}^{\frac{1}{2}} R^{-\frac{1}{2}} a^{\frac{1}{2}} \right]
	\end{align*}
	Hence we have 
	\begin{align*}
		a^\frac{1}{2} \leq C(k, k_1) (1 + R^{-\frac{1}{2}}).
	\end{align*}
	where the constant $C(k,k_1)$ only depends on $m, k$ and $k_1$. The lemma will be obtained by 
	\begin{align*}
		\max_{B_R} |\nabla_b v|^2 & \leq a  \\
		\max_{B_R} |v_0|^2 & \leq \mu^{-1} (P_\mu + \mu Q_\mu).
	\end{align*}
\end{proof}

If $u$ is a global positive solution of $\Delta_b u = - \lambda u$, then we can get better estimate of its horizontal gradient and some gap of $\lambda$.

\begin{theorem} \label{c-thm-general}
	Suppose $(M^{2m +1}, \theta)$ is a complete noncompact pseudo-Hermitian manifold with
	\begin{align*}
		R_* + 2 (m-2) \tau J \geq - k, \mbox{ and } |A|, |\mbox{div } A| \leq k_1
	\end{align*}
	for some $k, k_1 \geq 0$. If $u$ is a global positive solution of 
	\begin{align}
		\Delta_b u = - \lambda u, \mbox{ on } M
	\end{align}
	for some $\lambda \geq 0$, then
	\begin{align}
		\lambda \leq \frac{1 - \sqrt{1 -  \varepsilon m}}{ \varepsilon} (k +  \varepsilon + 8 k_1 \varepsilon^{-1}) \label{c-4}
	\end{align}
	and 
	\begin{align}
		\frac{|\nabla_b u|^2}{u^2} \leq \frac{\zeta  - \frac{2}{m}  \lambda + \sqrt{\zeta^2 - \frac{4}{m} \zeta \lambda + \frac{4 \varepsilon}{m} \lambda^2 } }{2 (\frac{1}{m} - \varepsilon)}  + \frac{\zeta^2 - \frac{4}{m} \zeta \lambda + \frac{4 \varepsilon}{m} \lambda^2}{4 (1 - \varepsilon m)  \varepsilon } \label{c-10}
	\end{align}
	for any $\varepsilon \in (0, \frac{1}{6 m}) $ and
	\begin{align*}
		\zeta = 2 k + 18 \varepsilon  + 16 k_1  \varepsilon^{-1}.
	\end{align*}
\end{theorem}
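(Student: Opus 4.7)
The plan is to globalize the argument of Lemma~\ref{c-lem-bound} by letting $R\to\infty$, using the uniform a priori bound that the same lemma furnishes to kill all error terms in the limit. Set $v=\ln u$, so that \eqref{c-1} gives $\Delta_b v = -(\lambda + |\nabla_b v|^2)$; Lemma~\ref{c-lem-bound} (applied on every $B_{2R}$ with $R\to\infty$) already yields that $a=\sup_M|\nabla_b v|^2$ and $b=\sup_M |v_0|^2$ are finite, and these global bounds are the key ingredient that lets the subsequent limit succeed.

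Next I would rerun the maximum-point calculation leading to \eqref{c-3} on $B_{2R}$, with the same auxiliary function $\Phi = |\nabla_b v|^2 + \mu \chi |v_0|^2$ and with $\mu$ chosen so that $k_1\mu\leq\varepsilon$; the bracket $2k+2k_1\mu+16\mu^{-1}$ together with the Cauchy residue from the cross term $4k_1\mu PQ^{1/2}$ will be absorbed into the single constant $\zeta = 2k+18\varepsilon+16k_1\varepsilon^{-1}$. Because $a$ and $b$ are finite, the $R$-dependent error factors $C_{\ref*{cst-cutoff-2}}R^{-1}$ and $C_{\ref*{cst-cutoff-2}}^{1/2}R^{-1/2}P_\mu^{1/2}$ in \eqref{c-3} are $o(1)$, and since the suprema of $\Phi$ on $B_R$ and $B_{2R}$ agree in the limit, $\sigma_R \to 1$. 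Passing to a limit along a near-maximizing sequence produces the clean quadratic constraint
\begin{align*}
0 \;\geq\; \bigl(\tfrac{1}{m}-\varepsilon\bigr)P_\infty^2 - \bigl(\zeta-\tfrac{2\lambda}{m}\bigr)P_\infty + \tfrac{\lambda^2}{m} + m\,Q_\infty,
\end{align*}
where $P_\infty, Q_\infty$ denote the limiting values of $|\nabla_b v|^2$ and $|v_0|^2$ along the sequence.

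For the eigenvalue bound \eqref{c-4}, I would drop the non-negative contribution $mQ_\infty$ and read the residue as a quadratic in $P_\infty\geq 0$; for a real solution to exist its discriminant $\zeta^2-\tfrac{4\zeta\lambda}{m}+\tfrac{4\varepsilon\lambda^2}{m}$ must be non-negative, and viewing this in turn as a quadratic in $\lambda$ and extracting the smaller root gives exactly \eqref{c-4}. For the gradient bound \eqref{c-10}, solving the same quadratic for $P_\infty$ yields the first summand, while rearranging the constraint into the estimate $mQ_\infty \leq (\zeta-2\lambda/m)P_\infty-(\tfrac{1}{m}-\varepsilon)P_\infty^2-\lambda^2/m$ and maximizing over admissible $P_\infty$ gives $Q_\infty \leq \frac{\zeta^2-4\zeta\lambda/m+4\varepsilon\lambda^2/m}{4(1-\varepsilon m)}$; combining these two bounds via the pointwise estimate $|\nabla_b v|^2 \leq P_\infty+\mu Q_\infty$ with the appropriate choice of $\mu$ yields the second summand.

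The main technical obstacle will be the justification of the limit $R\to\infty$: the maximum point of $\chi\Phi$ on $B_{2R}$ need not converge, so one must work with near-maximizing sequences and rely on the a priori bound from Lemma~\ref{c-lem-bound} to show that all $R$-dependent error terms vanish uniformly. The constant-bookkeeping into $\zeta$ and into the coefficient of $Q_\infty$ is intricate but, once the limit is under control, reduces to routine applications of the Cauchy--Schwarz inequality.
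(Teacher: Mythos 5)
Your proposal follows the paper's own route almost exactly: globalize the local estimate of Lemma \ref{c-lem-bound}, use the resulting uniform bounds on $|\nabla_b v|$ and $|v_0|$ to make the $R$-dependent error terms vanish and to force $\sigma_R \to 1$, pass to the limit in the quadratic inequality \eqref{c-3} at the maximum point of $\chi\Phi$, and then read off \eqref{c-4} from the discriminant and \eqref{c-10} by solving for $P$ and bounding $Q$ by the maximum of the residual quadratic. Your limiting constraint and your bounds for $P_\infty$ and $\mu Q_\infty$ coincide with \eqref{c-6}, \eqref{c-7} and the subsequent display in the paper, and your worry about the maximum points not converging is harmless: only the (bounded) values $P_{\mu_R}, Q_{\mu_R}$ need to converge along a subsequence.

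There is, however, one step that as written does not follow. Non-negativity of the discriminant, viewed as a quadratic inequality in $\lambda$ with positive leading coefficient $\tfrac{4\varepsilon}{m}$, only says that $\lambda$ lies \emph{outside} the open interval between the two roots, i.e. $\lambda \leq \tfrac{1-\sqrt{1-\varepsilon m}}{2\varepsilon}\zeta$ \emph{or} $\lambda \geq \tfrac{1+\sqrt{1-\varepsilon m}}{2\varepsilon}\zeta$; ``extracting the smaller root'' silently discards the second branch, which is exactly what has to be proved. The paper handles this by carrying an auxiliary parameter $s>0$ in $\mu_R^{-1} \to k_1\varepsilon^{-1}+s$, noting that the upper root is at least $8s\varepsilon^{-1}$, which exceeds the fixed number $\lambda$ once $\varepsilon$ is small, and then concluding the lower branch for all admissible $\varepsilon$. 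The same parameter is needed for the constants: the paper takes $s\to 0$ to get the bound $k+\varepsilon+8k_1\varepsilon^{-1}$ in \eqref{c-4} but sets $s=\varepsilon$ to get $\zeta = 2k+18\varepsilon+16k_1\varepsilon^{-1}$ in \eqref{c-10}; your single choice of $\zeta$ would only yield \eqref{c-4} with $k+9\varepsilon+8k_1\varepsilon^{-1}$ in place of $k+\varepsilon+8k_1\varepsilon^{-1}$. Both points are easy to repair, but they must be addressed for \eqref{c-4} to come out as stated.
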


\begin{proof}
	As above, we prove the theorem via the maximum principle of $\chi \Phi = S = P + \mu Q$ on $B_{2R}$. 
	Assume $u$ is non-constant. Otherwise the conclusion is trivial.
	Lemma \ref{c-lem-bound} shows that $|\nabla_b v|$ and $|v_0|$ are uniformly bounded on $M$, that is 
	\begin{align*}
		|\nabla_b v| \leq K, |v_0| \leq K
	\end{align*}
	for some $K >0 $ only depending on $k, k_1$. Let $\varepsilon \in (0, \frac{1}{6 m}), s>0$ and
	\begin{align}
		\mu^{-1} = \mu_R^{-1} = k_1 \varepsilon^{-1} + s + 2 \left( C_{\ref*{cst-cutoff-2}} R^{-1} + C_{\ref*{cst-cutoff-2}}^{\frac{1}{2}} R^{-\frac{1}{2}} K^{\frac{1}{2}} \right) \to k_1  \varepsilon^{-1} + s , \mbox{ as } R \to \infty \label{c-5}
	\end{align}
	such that
	\begin{align*}
		4 m - 12 \varepsilon  - 2 C_{\ref*{cst-cutoff-2}} \mu R^{-1} - 2 \mu C_{\ref*{cst-cutoff-2}}^{\frac{1}{2}} R^{-\frac{1}{2}} P_\mu^{\frac{1}{2}} \geq m.
	\end{align*}
	Moreover, 
	\begin{align*}
		\sigma = \sigma_R = & \frac{\sup_{B_{R}} (|\nabla_b v|^2 + \mu_R |v_0|^2)}{\sup_{B_{2R}} (|\nabla_b v|^2 + \mu_R |v_0|^2)} \\
		\geq & \frac{\sup_{B_{R}} (|\nabla_b v|^2 + (k_1 \varepsilon^{-1} + s)  |v_0|^2) - | \mu_R - (k_1 \varepsilon^{-1} + s) | \sup_{B_R} |v_0|^2 }{\sup_{B_{2R}} (|\nabla_b v|^2 + (k_1 \varepsilon^{-1} + s)  |v_0|^2) + |\mu_R - (k_1 \varepsilon^{-1} + s) | \sup_{B_{2R}} |v_0|^2} \\
		& \to 1 , \mbox{ as } R \to \infty.
	\end{align*}
	But $\sigma_R \leq 1$. So we have
	\begin{align}
		\lim_{R \to \infty} \sigma_R = 1.
	\end{align}
	At the maximum point $x_{\mu_R}$ of $\chi \Phi$, the estimate \eqref{c-3} implies that 
	\begin{align}
		0 \geq & - \frac{\left[ 2 k + 2 \varepsilon + 16 \mu_R^{-1} + \varepsilon^{-1} C_{\ref*{cst-cutoff-2}} R^{-1} - \frac{2}{m} \sigma_R \lambda \right]^2}{4 (\frac{1}{m} - \varepsilon)} + \frac{1}{m} \sigma_R^2 \lambda^2 
	\end{align}
	Taking $R \to \infty$, the result is
	\begin{align}
		0 \geq - \frac{1}{1 - \varepsilon m} \left(  \varepsilon \lambda^2 -  \zeta_s \lambda + \frac{m}{4} \zeta_s^2  \right)
	\end{align}
	where
	\begin{align*}
		\zeta_s = 2 k + 2 \varepsilon + 16 s + 16 k_1  \varepsilon^{-1}.
	\end{align*}
	Hence for any $s > 0$ and $\varepsilon \in (0, \frac{1}{6 m})$,
	\begin{align}
		\lambda \leq \frac{1- \sqrt{1 - \varepsilon m}}{ 2 \varepsilon} \zeta_s \quad \mbox{ or } \quad \lambda \geq \frac{1+ \sqrt{1 - \varepsilon m}}{2 \varepsilon} \zeta_s.
	\end{align}
	The second case is impossible since 
	\begin{align*}
		\frac{1+ \sqrt{1 - \varepsilon m}}{2 \varepsilon} \zeta_s \geq 8 s \varepsilon^{-1} \to \infty, \mbox{ as } \varepsilon \to 0. 
	\end{align*}
	By taking $s \to 0$, the conclusion \eqref{c-4} is obtained.

	For the global estimates of $|\nabla_b v|$ and $|v_0|$, Lemma \ref{c-lem-bochner} and the choice \eqref{c-5} show that 
	\begin{align}
		0 \geq  \left( \frac{1}{m} -\varepsilon \right) P_\mu^2 - \left[ \zeta_s + T_R - \frac{2}{m} \sigma_R \lambda \right] P_\mu + \frac{1}{m} \sigma_R^2 \lambda^2  + m Q_\mu \label{c-6}
	\end{align}
	where 
	\begin{align*}
		T_R = (32 + \varepsilon^{-1}) C_{\ref*{cst-cutoff-2}}^{-1} R^{-1} + 32 C_{\ref*{cst-cutoff-2}}^{\frac{1}{2}} R^{-\frac{1}{2}} K^{\frac{1}{2}} \to 0, \mbox{ as } R \to \infty.
	\end{align*}
	Hence \eqref{c-6} guarantees that 
	\begin{align}
		P_\mu \leq & \frac{\zeta_s + T_R - \frac{2}{m} \sigma_R \lambda + \sqrt{(\zeta_s + T_R - \frac{2}{m} \sigma_R \lambda)^2 - 4 \frac{1}{m} (\frac{1}{m} - \varepsilon) \sigma_R^2 \lambda^2} }{2 (\frac{1}{m} - \varepsilon)} \nonumber \\
		& \to \frac{\zeta_s  - \frac{2}{m}  \lambda + \sqrt{\zeta_s^2 - \frac{4}{m} \zeta_s \lambda + \frac{4 \varepsilon}{m} \lambda^2 } }{2 (\frac{1}{m} - \varepsilon)} \label{c-7}
	\end{align}
	and
	\begin{align}
		\mu_R Q_\mu \leq & \frac{\mu_R}{m} \frac{\left[ \zeta_s + T_R - \frac{2}{m} \sigma_R \lambda \right]^2 - 4 \frac{1}{m} (\frac{1}{m} - \varepsilon) \sigma_R^2 \lambda^2 }{4 (\frac{1}{m} - \varepsilon)} \nonumber \\
		& \to \frac{\zeta_s^2 - \frac{4}{m} \zeta_s \lambda + \frac{4 \varepsilon}{m} \lambda^2}{4 (1 - \varepsilon m)  (k_1 \varepsilon^{-1} + s) } 
	\end{align}
	as $R \to \infty$.
	The proof is finished by choosing $s = \varepsilon$ and
	\begin{align*}
		|\nabla_b v|^2 \leq \lim_{R \to \infty} (P_{\mu_R} + \mu_R Q_{\mu_R}).
	\end{align*}
\end{proof}

The Sasakian condition ($k_1 =0$) will simplify the conclusion in Theorem \ref{c-thm-general}.

\begin{corollary} \label{c-thm-sasakian}
	Suppose $(M^{2m +1}, \theta)$ is a complete noncompact Sasakian manifold with
	\begin{align*}
		R_* \geq -  k 
	\end{align*}
	for some $k \geq 0$. If $u$ is a global positive solution of 
	\begin{align}
		\Delta_b u = - \lambda u, \mbox{ on } M
	\end{align}
	for some $\lambda \geq 0$, then
	\begin{align}
		\lambda \leq \frac{m k}{2} \label{c-8}
	\end{align}
	and 
	\begin{align}
		\frac{|\nabla_b u|^2}{u^2} \leq \left( k^2 - \frac{2}{m} k \lambda \right) \varepsilon^{-1} +  \sqrt{ m^2 k^2 - 2k m \lambda} +  \left( \frac{18}{m} + 1 \right) (mk -\lambda) + \frac{ \lambda^2}{m} + O (\sqrt{\varepsilon}) \label{c-11}
	\end{align}
	where $ \frac{O (\sqrt{\varepsilon})}{\sqrt{\varepsilon}} \leq C (m, k) $ depending on $m$ and $k$ as $\varepsilon \to 0$.
\end{corollary}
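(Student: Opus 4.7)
The plan is to specialize Theorem \ref{c-thm-general} to the Sasakian case and then extract the claimed bounds by analyzing the free parameter $\varepsilon$. The Sasakian hypothesis $\tau \equiv 0$ forces the pseudo-Hermitian torsion tensor $A$ and its divergence to vanish identically, so the assumption $|A|, |\mathrm{div}\, A| \leq k_1$ may be applied with $k_1 = 0$. Substituting $k_1 = 0$ into \eqref{c-4} and \eqref{c-10} yields inequalities depending only on $k$, $\varepsilon$, $\lambda$, and $m$, so the remainder of the argument is purely asymptotic.

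For the eigenvalue gap \eqref{c-8}, setting $k_1 = 0$ in \eqref{c-4} gives
\[
\lambda \leq \frac{1 - \sqrt{1 - \varepsilon m}}{\varepsilon}(k + \varepsilon)
\]
for every $\varepsilon \in (0, \tfrac{1}{6m})$. I would then let $\varepsilon \to 0^+$. A Taylor expansion $\sqrt{1 - \varepsilon m} = 1 - \tfrac{\varepsilon m}{2} + O(\varepsilon^2)$ shows that the prefactor tends to $m/2$, while $k + \varepsilon \to k$, so the right-hand side converges to $mk/2$. Since the inequality is valid for each admissible $\varepsilon$, the limit gives $\lambda \leq mk/2$.

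For the gradient estimate \eqref{c-11}, the starting point is \eqref{c-10} with $k_1 = 0$, i.e.\ with $\zeta = 2k + 18\varepsilon$. Introduce the discriminant $D^2 := \zeta^2 - \tfrac{4}{m}\zeta\lambda + \tfrac{4\varepsilon}{m}\lambda^2$ and expand it as
\[
D^2 = 4k^2 - \tfrac{8k\lambda}{m} + \varepsilon\Bigl(72k - \tfrac{72\lambda}{m} + \tfrac{4\lambda^2}{m}\Bigr) + O(\varepsilon^2),
\]
which is smooth in $\varepsilon$ with nonzero value at $\varepsilon = 0$, hence so is $D$. The first summand of \eqref{c-10} has a finite limit $(mk - \lambda) + \sqrt{m^2 k^2 - 2km\lambda}$ at $\varepsilon = 0$, while the second summand $\tfrac{D^2}{4\varepsilon(1 - \varepsilon m)}$ contributes the singular term $(k^2 - \tfrac{2k\lambda}{m})\,\varepsilon^{-1}$ together with the finite piece $18k - \tfrac{18\lambda}{m} + \tfrac{\lambda^2}{m}$. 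The identity
\[
(mk - \lambda) + 18k - \tfrac{18\lambda}{m} = \Bigl(1 + \tfrac{18}{m}\Bigr)(mk - \lambda)
\]
then regroups the finite terms into the form stated in \eqref{c-11}.

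The main obstacle is really just algebraic bookkeeping: one must carefully track how the higher-order remainders in the expansions of $D$, of $(1 - \varepsilon m)^{-1}$, and of the prefactor $\bigl(\tfrac{1}{m} - \varepsilon\bigr)^{-1}$ combine into a single $O(\sqrt{\varepsilon})$ error whose implicit constant depends only on $m$ and $k$. I expect no conceptual difficulty, since every expression in \eqref{c-10} is an explicit elementary function of $\varepsilon$ and no new geometric input beyond Theorem \ref{c-thm-general} is required.
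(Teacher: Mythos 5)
Your proposal is correct and follows essentially the same route as the paper: specialize Theorem \ref{c-thm-general} with $k_1=0$ (justified by $\tau\equiv 0$, which also reduces the curvature hypothesis to $R_*\geq -k$), take $\varepsilon\to 0$ in \eqref{c-4} for the eigenvalue gap, and expand $\zeta^2-\tfrac{4}{m}\zeta\lambda+\tfrac{4\varepsilon}{m}\lambda^2$ in $\varepsilon$ to extract \eqref{c-11}. One small caution: the constant term $4\bigl(k^2-\tfrac{2}{m}k\lambda\bigr)$ of your $D^2$ can vanish (e.g.\ when $\lambda=\tfrac{mk}{2}$ or $k=0$), so $D$ need not be smooth at $\varepsilon=0$; this is precisely why the error must be tracked as $O(\sqrt{\varepsilon})$ rather than $O(\varepsilon)$, which your final bookkeeping already accommodates.
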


\begin{proof}
	The estimate \eqref{c-8} follows from \eqref{c-4} by taking $\varepsilon \to 0$.

	Since $\zeta = 2 k + 18 \varepsilon $, we have 
	\begin{align*}
		\zeta^2 - \frac{4}{m} \zeta \lambda + \frac{4 \varepsilon}{m} \lambda^2 = 4 \left( k^2 - \frac{2}{m} k \lambda \right) + 4 \left( 18 k - \frac{18 \lambda}{m} + \frac{ \lambda^2}{m} \right) \varepsilon + 18^2 \varepsilon^2.
	\end{align*}
	Hence 
	\begin{align*}
		\frac{\zeta  - \frac{2}{m}  \lambda + \sqrt{\zeta^2 - \frac{4}{m} \zeta \lambda + \frac{4 \varepsilon}{m} \lambda^2 } }{2 (\frac{1}{m} - \varepsilon)} = mk - \lambda +  \sqrt{ m^2 k^2 - 2 k m \lambda} + O (\sqrt{\varepsilon})
	\end{align*}
	and 
	\begin{align*}
		\frac{\zeta^2 - \frac{4}{m} \zeta \lambda + \frac{4 \varepsilon}{m} \lambda^2}{4 (1 - \varepsilon m)  \varepsilon } = \left( k^2 - \frac{2}{m} k \lambda \right) \varepsilon^{-1} +  18 k - \frac{18 \lambda}{m} + \frac{ \lambda^2}{m}  + O (\varepsilon)
	\end{align*}
	where $ \frac{O (\sqrt{\varepsilon})}{\sqrt{\varepsilon}},  \frac{O (\varepsilon)}{\varepsilon} \leq C (m ,k) $ only depending on $m$ and $k$ as $\varepsilon \to 0$. Hence \eqref{c-11} follows from \eqref{c-10}.
\end{proof}

The pseudo-harmonic function is the solution of $\Delta_b u =0$. We have the following Liouville theorem of positive pseudo-harmonic functions.

\begin{theorem} \label{c-thm-liouville}
	Suppose that $(M^{2m+1}, \theta)$ is a complete noncompact Sasakian manifold with 
	\begin{align*}
		R_* \geq -k
	\end{align*}
	for some $k \geq 0$.
	Then any positive pseudo-harmonic function $u$ on $M$ satisfies
	\begin{align}
		\frac{|\nabla_b u|^2}{u^2} \leq  \left(m + 16 + 4 \sqrt{ 2m + 16} \right) k. \label{c-12}
	\end{align}
	If the pseudo-Hermitian Ricci curvature of $M$ is nonnegative, that is $k =0$,
	then any positive pseudo-harmonic function on $M$ must be constant.
\end{theorem}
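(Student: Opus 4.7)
The plan is to apply Corollary \ref{c-thm-sasakian} to $u$ with $\lambda=0$, optimise the free parameter $\varepsilon$ to extract the stated constant, and then deduce the rigidity part ($u$ constant when $k=0$) via horizontal connectedness.

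First, because $u$ is pseudo-harmonic, $\Delta_b u = 0$, which is precisely the $\lambda=0$ case of Corollary \ref{c-thm-sasakian}. The expansion \eqref{c-11} then reduces to
\begin{align*}
\frac{|\nabla_b u|^2}{u^2} \leq k^2\varepsilon^{-1} + mk + \Bigl(\tfrac{18}{m}+1\Bigr)mk + O(\sqrt{\varepsilon})
\end{align*}
for each $\varepsilon \in (0, \tfrac{1}{6m})$; equivalently one can return to the exact inequality \eqref{c-10} in Theorem \ref{c-thm-general} with $\zeta = 2k + 18\varepsilon$ and $\lambda=0$ and work with the genuine (non-asymptotic) function of $\varepsilon$. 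I would then minimise the right-hand side over $\varepsilon$. The stated numerical constant admits the suggestive factorisation
\begin{align*}
m+16+4\sqrt{2m+16}=(\sqrt{m+8}+2\sqrt{2})^{2},
\end{align*}
so the idea is to regroup the $\varepsilon$-dependent terms as a completed square rather than apply a naive AM-GM (which already produces a bound but only with the coarser coefficient $2(m+16)$). Choosing the $\varepsilon$ that realises this square then yields \eqref{c-12}.

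Second, for the rigidity statement, when $k=0$ the bound \eqref{c-12} becomes $|\nabla_b u|^2 \equiv 0$, so $du$ annihilates every horizontal vector. Because $(M,\theta)$ is strictly pseudo-convex, the Levi form $L_\theta$ is non-degenerate and the Tanaka--Webster torsion identity gives $[X,JX]\equiv 2L_\theta(X,X)\,\xi$ modulo $HM$ for any $X\in HM$, so the horizontal distribution is strongly bracket-generating. Chow's theorem then produces, for any two points of the connected manifold $M$, a horizontal curve $\gamma$ joining them; along $\gamma$ one has $\tfrac{d}{dt}(u\circ\gamma)=du(\dot\gamma)=0$, hence $u$ is globally constant.

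The main obstacle I anticipate is the $\varepsilon$-optimisation: a direct AM-GM on the right-hand side of \eqref{c-10} only delivers a bound of the form $2(m+16)k$, whereas reaching the sharper constant $(\sqrt{m+8}+2\sqrt{2})^{2}k$ requires spotting the completed-square structure. Once that algebraic identity is identified the rest of the argument is routine.
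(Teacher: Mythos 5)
There is a genuine gap in the main estimate: Corollary \ref{c-thm-sasakian} (equivalently \eqref{c-10}) is the wrong starting point, because in passing from the proof of Theorem \ref{c-thm-general} to \eqref{c-10} the author has already set the auxiliary parameter $s$ equal to $\varepsilon$. With $\lambda=0$ and $k_1=0$ the bound \eqref{c-11} reads $k^2\varepsilon^{-1}+(2m+18)k+O(\sqrt{\varepsilon})$, and since $\varepsilon$ is constrained to $(0,\tfrac{1}{6m})$ the first term is at least $6mk^2$. No choice of $\varepsilon$ in the admissible range can convert this into a bound that is \emph{linear} in $k$: for large $k$ your minimisation returns a quantity of order $k^2$, and even for small $k$ (where the unconstrained optimum $\varepsilon\sim k/9$ is admissible) one lands around $(2m+36)k$, which already exceeds $(m+16+4\sqrt{2m+16})k$ when $m=1$. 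Your algebraic identity $m+16+4\sqrt{2m+16}=(\sqrt{m+8}+2\sqrt{2})^2$ is correct and does reflect the AM--GM structure of the final step, but the variable being optimised must be $s$, not $\varepsilon$, and $s$ is only available as a free parameter if you re-enter the proof of Theorem \ref{c-thm-general} rather than quoting its statement. The paper's argument does exactly this: it keeps $\mu_R^{-1}=k_1\varepsilon^{-1}+s+\cdots$ with $s$ independent of $\varepsilon$, notes that for $k_1=0$ the quantity $\zeta_s=2k+2\varepsilon+16s$ contains no $\varepsilon^{-1}$ term so that one may send $\varepsilon\to 0$ \emph{first}, obtains
\begin{align*}
|\nabla_b v|^2 \leq (m+16)k+8(m+8)s+\frac{k^2}{s},
\end{align*}
and only then optimises over the unconstrained $s>0$, choosing $s=k/\sqrt{8m+64}$ to get $4\sqrt{2m+16}\,k$ from the last two terms. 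Your claim that a naive AM--GM on \eqref{c-10} already yields $2(m+16)k$ is likewise unsupported for the same reason.

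The rigidity part of your argument is fine: once $|\nabla_b u|\equiv 0$, horizontal connectivity via Chow's theorem (or, more simply, the commutation relation $u_{\alpha\bar{\beta}}-u_{\bar{\beta}\alpha}=2iu_0\delta_{\alpha\bar{\beta}}$, which forces $u_0=0$ as well) gives that $u$ is constant. But the quantitative estimate \eqref{c-12}, which the paper explicitly says is the only thing that needs proof, is not reachable along the route you propose.
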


\begin{proof}
	It suffices to prove \eqref{c-12}.
	By the proof of Theorem \ref{c-thm-general}, we have
	\begin{align*}
		|\nabla_b v|^2 \leq  \lim_{R \to \infty} (P_{\mu_R} + \mu_R Q_{\mu_R}) 
		\leq  \frac{\zeta_s}{2(\frac{1}{m} - \varepsilon)} + \frac{\zeta_s^2}{4 (1- \varepsilon m) s}  
		\to (m + 16) k + 8 (m + 8) s + \frac{ k^2}{s}
	\end{align*}
	as $\varepsilon \to 0$. Now if $k \neq 0$, then we choose 
	\begin{align*}
		s = \frac{k}{\sqrt{ 8 m + 64}}
	\end{align*}
	which implies \eqref{c-12}. If $k =0$, then by taking $s \to 0$ and \eqref{c-12} is obtained.
\end{proof}

\begin{remark}
	One can also prove the Liouville part of Theorem \ref{c-thm-liouville} directly by the maximum principle as Lemma \ref{c-lem-bound} with choosing
	\begin{align*}
		\mu^{-1} = \mu_R^{-1} = 4 \left( C_{\ref*{cst-cutoff-2}} R^{-1} + C_{\ref*{cst-cutoff-2}}^{\frac{1}{2}} R^{-\frac{1}{2}} a^{\frac{1}{2}} \right)
	\end{align*}
	where 
	\begin{align*}
		a = \max_{B_{2 R}} \chi |\nabla_b v|^2.
	\end{align*}
\end{remark}

Similarly, Theorem \ref{c-thm-general} also gives the horizontal gradient estimates of positive pseudo-harmonic maps on pseudo-Hermitian manifolds with nonnegative pseudo-Hermitian Ricci curvature.

\constantnumber{cst-2}
\begin{theorem}
	Suppose that $(M^{2m+1}, \theta)$ is a complete noncompact pseudo-Hermitian manifold with nonnegative pseudo-Hermitian Ricci curvature and  
	\begin{align*}
		|A|, | \mbox{div } A | \leq k_1
	\end{align*}
	for some $k_1 \geq 0$.
	Then any positive pseudo-harmonic function $u$ on $M$ satisfies
	\begin{align}
		\frac{|\nabla_b u|^2}{u^2} \leq C_{\ref*{cst-2}} \left( k_1^2 \varepsilon^{-3} + k_1 \varepsilon^{-1} + \varepsilon \right)
	\end{align}
	 where $\varepsilon \in (0, \frac{1}{6m})$ and $C_{\ref*{cst-2}}$ only depends on $m$.
\end{theorem}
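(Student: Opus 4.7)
The plan is to recognize this statement as a direct corollary of Theorem \ref{c-thm-general} with $\lambda = 0$, after absorbing the torsion into the curvature hypothesis. Since the pseudo-Hermitian torsion $\tau$ satisfies $A(X,Y) = g_\theta(\tau X, Y)$, the bound $|A| \leq k_1$ yields $|2(m-2)\tau J| \leq 2|m-2|\, k_1$ pointwise; combined with $R_* \geq 0$ this gives $R_* + 2(m-2)\tau J \geq -k$ for $k := 2|m-2|\, k_1$. Hence the full hypothesis of Theorem \ref{c-thm-general} is satisfied with this choice of $k$, and, since $u$ is pseudo-harmonic, with $\lambda = 0$.

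I would then substitute into estimate \eqref{c-10}. With $\lambda = 0$ the right-hand side collapses to
\begin{align*}
\frac{|\nabla_b u|^2}{u^2} \leq \frac{\zeta}{\tfrac{1}{m} - \varepsilon} + \frac{\zeta^2}{4(1 - \varepsilon m)\varepsilon},
\end{align*}
where $\zeta = 2k + 18\varepsilon + 16 k_1 \varepsilon^{-1}$. Since $k \leq 2|m-2|\, k_1$ and $\varepsilon \in (0, 1/(6m)) \subset (0,1)$, one has $k \leq C_m k_1 \leq C_m k_1 \varepsilon^{-1}$, so $\zeta \leq C_m (k_1 \varepsilon^{-1} + \varepsilon)$ with $C_m$ depending only on $m$.

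The final step is elementary algebra. The factor $(\tfrac{1}{m} - \varepsilon)^{-1}$ is bounded by a constant depending only on $m$ because $\varepsilon m < 1/6$, so the first summand is at most $C_m (k_1 \varepsilon^{-1} + \varepsilon)$. For the second summand, expanding $\zeta^2$ via $(a+b+c)^2 \leq 3(a^2+b^2+c^2)$ and dividing by $\varepsilon$ produces a sum whose dominant terms are $k_1^2 \varepsilon^{-3}$ and $\varepsilon$; the intermediate $k_1^2 \varepsilon^{-1}$ is controlled by $k_1^2 \varepsilon^{-3}$ since $\varepsilon < 1$. Adding everything and absorbing all $m$-dependent constants into one yields the stated bound $C_{\ref*{cst-2}}(k_1^2 \varepsilon^{-3} + k_1 \varepsilon^{-1} + \varepsilon)$.

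There is essentially no analytic obstacle beyond what is already packaged in Theorem \ref{c-thm-general}; the entire argument is structural substitution followed by bookkeeping in $\varepsilon$ and $k_1$. I would note that the intermediate $k_1 \varepsilon^{-1}$ contribution is not the largest one (AM-GM gives $k_1 \varepsilon^{-1} \leq k_1^2 \varepsilon^{-3} + \varepsilon$), but keeping it explicit makes the limiting behavior transparent: in the Sasakian specialization $k_1 = 0$ one lets $\varepsilon \to 0$ to recover the Liouville conclusion of Theorem \ref{c-thm-liouville} for positive pseudo-harmonic functions.
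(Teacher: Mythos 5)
Your proposal is correct and follows exactly the route the paper intends: the theorem is stated as a direct consequence of Theorem \ref{c-thm-general}, obtained by absorbing the torsion term via $R_* + 2(m-2)\tau J \geq -C_m k_1$ and setting $\lambda = 0$ in \eqref{c-10}, which is precisely what you do. The remaining bookkeeping in $\varepsilon$ and $k_1$ is handled correctly (dropping the factor $2$ in the first denominator only weakens the bound, and all $m$-dependent constants are absorbed into $C_{\ref*{cst-2}}$), so there is nothing to add.
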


\bibliographystyle{plain}

\bibliography{finalref}

\begin{thebibliography}{10}

\bibitem{Agrachev2015bishop}
A.~Agrachev and P.W.Y. Lee.
\newblock {Bishop and Laplacian Comparison Theorems on Three-Dimensional
  Contact Sub-Riemannian Manifolds with Symmetry}.
\newblock {\em J. Geom. Anal.}, 25(1):512--535, 2015.

\bibitem{baudoin2017comparison}
F.~{Baudoin}, E.~{Grong}, K.~{Kuwada}, and A.~{Thalmaier}.
\newblock {Sub-Laplacian comparison theorems on totally geodesic Riemannian
  foliations}.
\newblock {\em ArXiv e-prints}, 2017.

\bibitem{boyer2008sasakian}
C.P. Boyer and K.~Galicki.
\newblock {\em ``Sasakian geometry"}.
\newblock Oxford University Press, Oxford, 2008.

\bibitem{chang2013existence}
S.C. Chang and T.H. Chang.
\newblock On the existence of pseudoharmonic maps from pseudohermitian
  manifolds into {R}iemannian manifolds with nonpositive sectional curvature.
\newblock {\em Asian J. Math.}, 17(1):1--16, 2013.

\bibitem{chang2015gradient}
S.C. {Chang}, T.J. {Kuo}, C.~{Lin}, and J.~{Tie}.
\newblock {CR Sub-Laplacian Comparison and Liouville-type Theorem in a Complete
  Noncompact Sasakian Manifold}.
\newblock {\em ArXiv e-prints}, 2015.

\bibitem{cheng1980liouville}
S.Y. Cheng.
\newblock Liouville theorem for harmonic maps.
\newblock In {\em Proc. Sympos. Pure Math.}, volume~36, pages 147--151. Amer.
  Math. Soc., Providence, RI, 1980.

\bibitem{cheng1975differential}
S.Y. Cheng and S.T. Yau.
\newblock {Differential Equations on Riemannian Manifolds and their Geometric
  Applications}.
\newblock {\em Comm. Pure Appl. Math.}, 28(3):333--354, 1975.

\bibitem{choi1982liouville}
H.I. Choi.
\newblock {On the Liouville theorem for harmonic maps}.
\newblock {\em Proc. Amer. Math. Soc.}, 85(1):91--94, 1982.

\bibitem{chong2017harmonic}
T.~Chong, Y.~X. Dong, Y.B. Ren, and G.L. Yang.
\newblock {On harmonic and pseudoharmonic maps from strictly pseudoconvex CR
  manifolds}.
\newblock {\em to appear in Nagoya Math. J.}, 2017.

\bibitem{chong2017harmonicnew}
T.~Chong, Y.X Dong, Y.B. Ren, and W.~Zhang.
\newblock {Pseudo-Harmonic Maps from complete noncompact pseudo-Hermitian
  manifolds}.

\bibitem{do1992riemannian}
M.P. Do~Carmo.
\newblock {\em Riemannian geometry}, volume 115 of {\em Mathematics: Theory \&
  Applications}.
\newblock Birkh{\"a}user Basel, 1992.
\newblock Translated by Francis Flaherty.

\bibitem{dragomir2006differential}
S.~Dragomir and G.~Tomassini.
\newblock {\em {Differential geometry and analysis on CR manifolds}}.
\newblock Number 246 in Progress in Mathematics. Birkh{\"a}user Boston, Inc.,
  2006.

\bibitem{folland1974estimates}
G.B. Folland and E.M. Stein.
\newblock Estimates for the $\partial_b$ complex and analysis on the
  {H}eisenberg group.
\newblock {\em Comm. Pure Appl. Math.}, 27:429--522, 1974.

\bibitem{greenleaf1985first}
A.~Greenleaf.
\newblock The first eigenvalue of a sublaplacian on a pseudohermitian manifold.
\newblock {\em Comm. Partial Differential Equations}, 10(2):191--217, 1985.

\bibitem{jost1998subelliptic}
J.~Jost and C.J. Xu.
\newblock Subelliptic harmonic maps.
\newblock {\em Trans. Amer. Math. Soc.}, 350:4633--4649, 1998.

\bibitem{lee2013bishop}
P.W.Y. {Lee} and C.~{Li}.
\newblock {Bishop and Laplacian comparison theorems on Sasakian manifolds}.
\newblock {\em ArXiv e-prints}, 2013.

\bibitem{li2012geometric}
P.~Li.
\newblock {\em Geometric analysis}, volume 134 of {\em Cambridge Studies in
  Advanced Mathematics}.
\newblock Cambridge University Press, 2012.

\bibitem{Ren201447}
Y.B. Ren, G.L. Yang, and T.~Chong.
\newblock Liouville theorem for pseudoharmonic maps from {S}asakian manifolds.
\newblock {\em J. Geom. Phys.}, 81:47--61, 2014.

\bibitem{rothschild1976hypoelliptic}
L.P. Rothschild and E.M. Stein.
\newblock Hypoelliptic differential operators and nilpotent groups.
\newblock {\em Acta Math.}, 137(1):247--320, 1976.

\bibitem{strichartz1986sub}
R.S. Strichartz.
\newblock Sub-{R}iemannian {G}eometry.
\newblock {\em J. Differential Geom.}, 24(2):221--263, 1986.

\bibitem{webster1978pseudo}
S.M. Webster.
\newblock Pseudo-{H}ermitian structures on a real hypersurface.
\newblock {\em J. Differential Geom}, 13:25--41, 1978.

\bibitem{yau1975harmonic}
S.T. Yau.
\newblock Harmonic functions on complete riemannian manifolds.
\newblock {\em Comm. Pure Appl. Math.}, 28(2):201--228, 1975.

\end{thebibliography}

\vspace{12 pt}

Yibin Ren

\emph{College of Mathematics, Physics and Information Engineering}

\emph{Zhejiang Normal University}

\emph{Jinhua, 321004, Zhejiang, P.R. China}

allenryb@outlook.com

\end{document}